\newlength{\myhmargin} \setlength{\myhmargin}{1in} \addtolength{\myhmargin}{18pt}
\newcommand\HFK{\widehat{HFK}}
\newcommand{\longcomment}[2]{#2}
\DeclareFontFamily{U}{mathx}{\hyphenchar\font45}
\DeclareFontShape{U}{mathx}{m}{n}{
      <5> <6> <7> <8> <9> <10>
      <10.95> <12> <14.4> <17.28> <20.74> <24.88>
      mathx10
      }{}
\DeclareSymbolFont{mathx}{U}{mathx}{m}{n}
\DeclareMathAccent{\widecheck}{0}{mathx}{"71}
\newcommand{\HMto}{\widecheck{\mathit{HM}}}
    \def\HMto{%
       \setbox0=\hbox{$\widehat{\mathit{HM}}$}
       \setbox1=\hbox{$\mathit{HM}$}
       \dimen0=1.1\ht0
       \advance\dimen0 by 1.17\ht1
       \smash{\mskip2mu\raise\dimen0\rlap{%
          \begin{turn}{180}
              {$\widehat{\phantom{\mathit{HM}}}$}
           \end{turn}} \mskip-2mu    
                \mathit{HM}
    }{\vphantom{\widehat{\mathit{HM}}}}{}}
    \newcommand*\oline[1]{%
  \vbox{%
    \hrule height 0.35pt
    \kern0.1ex
    \hbox{%
      \kern-0.0em
      \ifmmode#1\else\ensuremath{#1}\fi
      \kern-0.1em
    }
  }
}
\newtheorem{theorem}{Theorem}[section]
\newtheorem{lemma}[theorem]{Lemma}
\newtheorem{corollary}[theorem]{Corollary}
\newtheorem{proposition}[theorem]{Proposition}
\theoremstyle{definition}
\newtheorem{definition}[theorem]{Definition}
\newtheorem{remark}[theorem]{Remark}
\newtheorem*{rep@thm}{\rep@title}
\newcommand{\newreptheorem}[2]{%
\newenvironment{rep#1}[1][0,0]{%
\def\rep@title{#2##1}%
\begin{rep@thm}}%
{\end{rep@thm}}}
\begin{document}

\title{On the Transverse invariant and braid dynamics}

\author[Lev Tovstopyat-Nelip]{Lev Tovstopyat-Nelip}
\address{Department of Mathematics \\ Michigan State University}
\email{tovstopy@msu.edu}


\begin{abstract} 
Suppose $(B,\pi)$ is an open book supporting a contact 3-manifold $(Y,\xi)$, where the binding $B$ is possibly disconnected, and $K$ is a braid about this open book. Then $B\cup K$ is naturally a transverse link in $(Y,\xi)$. We prove that the transverse link invariant in knot Floer homology, \[\widehat{t}(B\cup K)\in \widehat{HFK}(-Y,B\cup K),\] defined in \cite{equiv} is always nonzero. This generalizes the main results of Etnyre and Vela-Vick in \cite{VVbind,torsionob}.
As an application, we show that if $K$ is braided about an open book with connected binding, and has fractional Dehn twist coefficient greater than one, then $\widehat{t}(K)\ne 0$. This generalizes a result of Plamenevskaya \cite{Pla} for classical braid closures. 


 \end{abstract}

\maketitle


\section{introduction}
\label{sec:intro}

A central problem in 3-dimensional contact geometry is to distinguish and classify transverse links. There is a classical  invariant of transverse links called the self-linking number. Distinguishing transverse links that are smoothly isotopic and  have  the same  self-linking number can be quite difficult. In this paper, we study  a transverse link invariant in knot Floer homology which has proven     useful in this regard. We recall this and related invariants below.


Suppose $(B,\pi)$ is an open book supporting a contact 3-manifold $(Y,\xi)$. Any link $K$ braided about the open book is naturally transverse, and all transverse isotopy classes are realized via such braids \cite{pav}. Baldwin, Vela-Vick, and V\'{e}rtesi used this viewpoint in \cite{equiv} to define an invariant of transverse links, the so-called BRAID invariant, which assigns to such  $K$ a class in knot Floer homology, \[\widehat{t}(K)\in \widehat{HFK}(-Y,K).\] They then  proved that $\widehat{t}$ agrees with the  LOSS invariant $\widehat{\mathcal{T}}$ defined by Lisca, Ozsv{\'a}th, Stipsicz, and Szab{\'o}   \cite{LOSS} for transverse knots, and with the  GRID invariant $\widehat{\theta}$ defined by Ozsv{\'a}th, Szab{\'o}, and Thurston  \cite{grid} for   transverse links  in the tight contact $3$-sphere $(S^3,\xi_{std})$.

Our first goal in this paper is to  prove a nonvanishing result for the transverse invariant of a braid together with the binding.
To set the stage, recall that the binding $B$ of an open book $(B,\pi)$ is naturally a transverse link in the supported contact manifold  $(Y,\xi)$.

\begin{theorem}
\label{thm:binding}
Suppose $(B,\pi)$ is an open book supporting $(Y,\xi)$ and $K$ is a transverse link  braided about this open book. The transverse invariant of braid union binding is non-zero, i.e.

\center{$\widehat{t}(B\cup K)\ne 0$}.
\end{theorem}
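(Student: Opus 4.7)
The plan is to generalize the Heegaard-diagrammatic proof of Vela-Vick \cite{VVbind} for the nonvanishing of the binding invariant, extending it to accommodate the additional braid $K$. I would construct a multi-pointed Heegaard diagram for $(-Y, B \cup K)$ adapted simultaneously to the open book $(B,\pi)$ and to the braid $K$, identify $\widehat{t}(B \cup K)$ with a distinguished intersection point in that diagram, and then verify nontriviality by local arguments in the winding regions of the diagram.

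First, following the constructions in \cite{equiv} and \cite{VVbind}, I would form a Heegaard surface $\Sigma \subset -Y$ by gluing two pages $S_{1/2}$ and $S_0$ of $(B,\pi)$ along the binding $B$. Arcs on $S$ cutting the page into a disk yield the $\alpha$- and $\beta$-curves after applying the monodromy $\phi$. I would place basepoints both at the intersections of $K$ with the chosen pages (as in \cite{equiv}) and at pairs flanking each component of $B$, after the customary winding-region modification (as in \cite{VVbind}). This yields a multi-pointed diagram for the link $B \cup K$. The BRAID invariant $\widehat{t}(B \cup K)$ is then represented by a canonical generator $\mathbf{x}_0 \in \Ta \cap \Tb$, whose coordinates associated to $K$ are those prescribed by the recipe of \cite{equiv} and whose coordinates near each binding component are the innermost winding-region points of \cite{VVbind}; the two recipes are compatible because they produce the respective coordinates of $\mathbf{x}_0$ in disjoint parts of $\Sigma$.

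The crux is to show $\mathbf{x}_0$ represents a nonzero class in $\hfk(-Y, B \cup K)$. The key observation is that the local picture in each binding winding region is identical to that in \cite{VVbind}: any nontrivial Whitney disk with a corner at $\mathbf{x}_0$ must carry strictly positive local multiplicity in some binding winding region, which forces its domain to cross a basepoint on $B$ and so contribute trivially to the differential. The braid basepoints do not interfere, because they are disjoint from the binding winding regions, while the braid coordinates of $\mathbf{x}_0$ are those already shown in \cite{equiv} to avoid all positive holomorphic disks in the braid-only diagram. A parallel argument, applied to potential disks \emph{into} $\mathbf{x}_0$, rules out $\mathbf{x}_0$ being a boundary.

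The main obstacle I anticipate is ensuring that these two local nonvanishing arguments cooperate cleanly. When $K$ has strands winding many times about $B$, or when $B$ has multiple components, a positive periodic domain in the combined diagram could in principle be supported either near the binding or near the braid, and one must rule out disks in both regimes simultaneously. This requires a careful combinatorial analysis of positive domains connecting $\mathbf{x}_0$ to any other generator, and is where the genuinely new content of the proof, beyond a formal juxtaposition of \cite{equiv} and \cite{VVbind}, should lie.
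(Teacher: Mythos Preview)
Your approach is genuinely different from the paper's, and as written it contains a real gap.

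The paper does \emph{not} argue by direct local analysis of holomorphic disks in winding regions. Instead it proceeds in two stages. First, it proves the binding-only case $\widehat t(B)\neq 0$ by (i) showing, for the \emph{identity} monodromy, that the distinguished generator is the unique generator of maximal Alexander grading (with respect to $B$) in its $\mathrm{Spin}^c$ structure (a case-by-case $\epsilon$-path computation), and then (ii) reducing an arbitrary monodromy to the identity via the surgery exact triangle: adding or removing a negative Dehn twist along a nonseparating curve induces a map which, by the adjunction inequality, is an isomorphism in the top $A_B$-grading and carries $\widehat t$ to $\widehat t$. Second, to incorporate the braid $K$, the paper observes that in the top $A_B$-grading the added braid curves contribute only a tensor factor $V_1\otimes\cdots\otimes V_k$ with zero differential, so nonvanishing of $\widehat t(B\cup K)$ follows immediately from nonvanishing of $\widehat t(B)$.

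The gap in your proposal is the sentence ``the braid coordinates of $\mathbf{x}_0$ are those already shown in \cite{equiv} to avoid all positive holomorphic disks in the braid-only diagram,'' together with the promised ``parallel argument'' for disks \emph{into} $\mathbf{x}_0$. The result of \cite{equiv} only shows that the braid generator is a \emph{cycle}; it can certainly be a boundary in the braid-only diagram---that is exactly when $\widehat t(K)=0$, which happens often. So no parallel argument from \cite{equiv} exists. What actually kills disks into $\mathbf{x}_0$ is the presence of the $B$-basepoints: restricting to the top $A_B$-grading forces all components of any relevant generator into $S_{1/2}$, where the braid curves have trivial combinatorics. Your outline does not isolate this mechanism. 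A secondary issue is that Vela-Vick's winding-region argument in \cite{VVbind} is for \emph{connected} binding; extending it directly to disconnected $B$ is not routine (indeed \cite{torsionob} used different methods), whereas the paper's surgery-triangle reduction handles all cases uniformly.
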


There are two notable antecedents to   Theorem \ref{thm:binding}. 
First, Vela-Vick proved in \cite{VVbind} that  the LOSS invariant $\widehat{\mathcal{T}}(B)$ is  nonzero when the binding $B$ is connected. Second, Etnyre and Vela-Vick proved in \cite{torsionob} that $\overline{c}(B)$ is nonzero even when $B$ is disconnected, where \[\overline{c}(B)\in SFH(-Y(B),\Gamma_\mu)\cong \HFK(-Y,B)\] is the transverse invariant defined in \cite{StipV} using a partial open book for the link complement. 

Theorem \ref{thm:binding} can be viewed as a generalization of both of these results. Indeed, restricting Theorem \ref{thm:binding} to the case where $K$ is the  empty link and $B$ is connected recovers the first result, since the BRAID and LOSS invariants agree for transverse knots. Moreover, it is shown in \cite{StipV} that $\overline{c}=\widehat{\mathcal{T}}$  for transverse \emph{knots}, and the arguments of \cite{equiv,StipV} should extend to show that $\overline{c}= \widehat{t}$ for  transverse links as well. Therefore, restricting Theorem \ref{thm:binding} to the case where $K$ is empty should recover the second result above.

The dynamical properties of the monodromy $\phi$ of an open book decomposition having connected binding can encode important geometric information about the supported contact structure $\xi_\phi$.
Roughly, the \emph{fractional Dehn twist coefficient} of the monodromy, denoted $c(\phi)$, quantifies how much positive twisting $\phi$ induces near the boundary of the page. 
Honda, Kazez and Mati\'{c} proved that if $c(\phi)<0$, which implies that $\phi$ is not \emph{right-veering}, then the supported contact structure is overtwisted \cite{RVD}; in particular the contact invariant $c(\xi_\phi) \in \widehat{HF}(-Y)$ vanishes. They also established a partial converse, if $c(\phi)\ge1$, then $\xi_\phi$ is weakly symplectically fillable \cite{RVD2}; it follows that $c(\xi_\phi)\ne 0$.
It is interesting to ask how much of this story translates to the setting of braids about open books and the transverse invariant.

Given a \emph{pointed monodromy} $g\in Mod(S\smallsetminus \{p_1,\dots,p_n\},\partial S)$ we may take the closure and obtain a link $K$ braided about the underlying open book. Here we view the surface $S$ as a fixed page and the marked points $\{p_1,\dots,p_n\}$ as $K\cap S$. The notion of fractional Dehn twist coefficient generalizes readily to pointed monodromies. It is shown in \cite{equiv} that if $c(g)<0$ then the BRAID invariant of $K$ vanishes. This was later exploited by Baldwin and Grigsby in \cite{trivialbraid} to give a new solution to the word problem in the Artin braid group. More recently, Plamenevskaya proved the following partial converse for the GRID invariant:



\begin{theorem} 
\label{thm:pla} \cite{Pla}
If $K$ is the closure of $\beta\in B_n$ having $c(\beta)>1$, then $\widehat{\theta}(K)\ne 0$. 
\end{theorem}

The above is proven by applying comultiplication \cite{comultgrid} of the invariant $\widehat{\theta}$ to reduce to the case of studying a reference braid $T$ having $\widehat{\theta}(T)\ne 0$. Reducing to the reference braid relies on the notion of $\sigma$-positivity, coming from the Dehornoy ordering on the braid group. $\widehat{\theta}(T)\ne 0$ is established by constructing a grid diagram for $T$ and studying the associated combinatorial chain complex. It is useful to observe that the reference braid $T$ is transversely isotopic to $K'\cup U$, where $K'$ is some braid about the trivial open book $(U,\pi)$ for $S^3$.

Studying geometric properties of transverse links arising from dynamical properties of their pointed monodromies is more natural from the perspective of the BRAID invariant. Indeed, via an application of Theorem \ref{thm:binding} we are able to generalize the above theorem:

\begin{theorem}
\label{thm:largetwist}
Suppose $K$ is the closure of a pointed monodromy $g\in Mod(S\smallsetminus P,\partial S)$, with $\partial S$ connected. If $c(g)>1$ then the BRAID invariant of $K$ is non-zero, i.e. 
\[\widehat{t}(K)\ne 0.\]
\end{theorem}

The fractional Dehn twist coefficients of a pointed monodromy and that of the underlying open book can differ by arbitrary amounts, see \cite{IKess}. We prove vanishing of the BRAID invariant for loose links (Theorem \ref{thm:loose}) and observe the following immediate corollary:
\begin{corollary}
Suppose $K$ is the closure of a pointed monodromy $g\in Mod(S\smallsetminus P,\partial S)$, with $\partial S$ connected. If $c(g)>1$ then $K$ is non-loose.
\end{corollary}

\noindent The above should be compared with Theorem 6.1 of \cite{IKquasi}, a similar result for braids about planar open books equipped with multiple boundary components.

\subsection{Acknowledgements} We thank John Baldwin for many helpful discussions. 

\section{Preliminaries}

\subsection{Braids and transverse links}
\label{subsec:contact}
We assume the reader has some familiarity with contact geometry. 
For an introduction to the Giroux correspondence consult the notes of Etnyre \cite{openbooks}. For a reference on transverse links we point the reader to \cite{legtransverse}.

Bennequin \cite{ben} studied transverse links in the tight 3-sphere via braids. His ideas have been generalized to study transverse links in arbitrary contact manifolds 3-manifolds.
Suppose that $(B,\pi)$ is an open book supporting a contact 3-manifold $(Y,\xi)$; the binding $B$ is naturally transverse to the contact structure $\xi$.

A link $K\subset Y\smallsetminus B$ is \emph{braided} about $(B,\pi)$ if it is positively transverse to the pages of the open book. 
We say that two braids about an open book are \emph{braid isotopic} if they are isotopic through braids.
Any link braided about an open book is braid isotopic to a link transverse to $\xi$, as the contact planes are nearly tangent to the pages away from a neighborhood of $B$.


There is a notion of positive Markov stabilization for braids with respect to an arbitrary open book, defined in \cite{pav}. This operation increases the braid index by one, but preserves the transverse isotopy class of the braid. The following is a generalization of the transverse Markov theorem of Wrinkle \cite{wrinkle}, and independently Orekov and Shevchishin \cite{orevkov}:

\begin{theorem} \cite{pav}
\label{thm:markov}
Suppose $(B,\pi)$ is an open book supporting $(Y,\xi)$. Every transverse link in $(Y,\xi)$ is transversely isotopic to a braid with respect to $(B,\pi)$.
Moreover, two links braided about $(B,\pi)$ are transversely isotopic if and only if they become braid isotopic after some number of positive Markov stabilizations. 
\end{theorem}



\subsection{Knot Floer homology}
\label{subsec:hfknot}
We fix some notation for knot Floer chain complexes and homology groups. See Section 2 of \cite{equiv} for a review.

To a \emph{weakly admissible} $2n$-pointed Heegaard diagram $\mathcal{H} = (\Sigma,\boldsymbol{\alpha},\boldsymbol{\beta},\bold{w},\bold{z})$, encoding an $m$-component link $L\subset Y$, we associate a chain complex
$CFK^\infty (\mathcal{H})$ \cite{holknots}. As an $\mathbb{Z}_2 [U_1,\dots,U_n]$-module, this complex is generated by tuples of the form
$[\bold{a},i_1,\dots,i_n]$, where
$\bold{a}\in \mathbb{T}_{\boldsymbol{\alpha}}\cap\mathbb{T}_{\boldsymbol{\beta}}$ and each $i_j \in \mathbb{Z}$. The action of the $U$-variables is specified by \[U_j \cdot [\bold{a},i_1,\dots,i_j,\dots,i_n] = [\bold{a},i_1,\dots,i_j -1,\dots, i_n].\]  

The differential counts pseudo-holomorphic disks 
\[
\partial ^\infty [\bold{a},i_1,\dots,i_n] = 
\sum\limits_{ \bold{b}\in \mathbb{T}_{\boldsymbol{\alpha}}\cap\mathbb{T}_{\boldsymbol{\beta} }} \sum\limits_{\substack{\phi\in\pi_2 (\bold{a},\bold{b})\\ \mu(\phi)=1\\  n_\bold{w}(\phi) = 0}}  (\# \widehat{\mathcal{M}}(\phi))  [\bold{b},i_1-n_{z_1}(\phi),\dots,i_n-n_{z_n}(\phi)].
\]
For an integer $c$, the subcomplex $ CFK^{\le c}(\mathcal{H})$ is generated, over $\mathbb{Z}_2 [U_1,\dots,U_n]$, by elements of the form $[\bold{a},c,\dots,c]$. 
The \emph{minus} complex is defined to be $CFK^-(\mathcal{H}) := CFK^{\le 0}(\mathcal{H})$.
If two basepoints $z_i$ and $z_j$ correspond to the same component of $L$, then the variables $U_i$ and $U_j$ act identically on $HFK^-(Y,L):= H_* (CFK^- (\mathcal{H}))$.
By reindexing the basepoints, we arrange that $z_1,\dots, z_m$ correspond to the $m$ distinct components of the link. $HFK^-(Y,L)$ is then an invariant of the pair $(Y,L)$, well-defined up to $\mathbb{Z}_2 [U_1,\dots, U_m]$-module isomorphism. 


By setting $U_1,\dots, U_m = 0$ in $CFK^-(\mathcal{H})$, we obtain the \emph{hat} complex $\widehat{CFK}(\mathcal{H})$. The homology of this complex, denoted $\widehat{HFK}(Y,L)$, is an invariant of the pair $(Y,L)$ up to $\mathbb{Z}_2$-module isomorphism. 
Let $CFK^{\ge 0}(\mathcal{H})$ denote the quotient complex $CFK^\infty (\mathcal{H})/CFK^{\le -1}(\mathcal{H})$. 
For an $\mathbb{Z}_2 [U_1,\dots U_n]$-module $M$, let $M^\vee$ denote 
\[Hom_{\mathbb{Z}_2 [U_1,\dots,U_n]}(M,\mathbb{Z}_2 [U_1,\dots,U_n,U_1 ^{-1},\dots, U_n ^{-1}]/\mathbb{Z}_2 [U_1,\dots,U_n]).\] Observe that 
we can then identify $CFK^{\ge 0}(\mathcal{H}) ^\vee$ with $CFK^- (\Sigma,\boldsymbol{\beta},\boldsymbol{\alpha},\bold{w},\bold{z})$; where $(\Sigma,\boldsymbol{\beta},\boldsymbol{\alpha},\bold{w},\bold{z})$ is a diagram encoding $(-Y,-L)$.

Setting all $U$-variables equal to zero, we obtain the \emph{totally blocked} complex $\widetilde{CFK}(\mathcal{H})$, whose homology $\widetilde{HFK}(\mathcal{H})$ is an invariant of $(Y,L,n)$, where $n$ is the number of basepoint pairs. As $\mathbb{Z}_2$-modules, $\widetilde{HFK}(\mathcal{H})\simeq \widehat{HFK}(Y,L)\otimes W^{\otimes n-m}$, where $W$ is a 2-dimensional vector space. If $Y$ is a $\mathbb{Q}HS^3$ then the two generators of $W$ have bigradings $(0,0)$ and $(-1,-1)$. 
The projection $\widehat{CFK}(\mathcal{H})\to \widetilde{CFK}(\mathcal{H})$ induces an injection $\iota :\widehat{HFK}(Y,L)\xhookrightarrow{} \widetilde{HFK}(\mathcal{H})$.

\subsection{The BRAID invariant}
\label{subsec:HFK}

In this subsection we review the definition of the BRAID invariant. The construction is similar to that of the contact invariant given in \cite{HKM}. 

As before let, $(B,\pi)$ be an open book supporting $(Y,\xi)$. Let $S_\theta:= \pi^{-1}(\theta)$ denote a page, $\phi$ denote the open book monodromy.
Suppose $S$ has genus $g$ and $m$ boundary components. 
Let $K$ be an index $n$ braid about $(B,\pi)$, i.e. suppose $K\cap S_0$ consists of $n$ points, $P=\{p_1,\dots,p_n\}$, and let $g \in Mod(S\smallsetminus P,\partial S)$ denote some lift of $\phi$ having closure $K$.

A \emph{basis of arcs} $\{a_i\}_1 ^{2g+n+m-2}\subset S\smallsetminus P$ is a collection of properly embedded disjoint arcs which cut $S\smallsetminus P$ into $n$ discs, each containing one point of $P$. 
The pointed monodromy $\widehat{\phi}$ together with any basis of arcs specifies a Heegaard diagram $\mathcal{H}_g= (\Sigma,\boldsymbol{\beta},\boldsymbol{\alpha},\bold{w}_K,\bold{z}_K)$ encoding $(-Y,K)$.
Let $\{b_i\}_1^{2g+n+m-2}$ be another basis of arcs obtained by perturbing the endpoints of $a_i$ in the oriented direction of $\partial S$, and isotoping in $S\smallsetminus P$ so that $a_i$ intersects $b_i$ transversely in a single point with positive sign. We construct the diagram $\mathcal{H}$:
\begin{itemize}
\item
$\Sigma=S_{1/2}\cup -S_0$
\item
$\alpha_i = a_i\times \{0,1/2\}$, $\beta_i = b_i\times \{1/2\}\cup g(b_i)\times \{0\}$
\item
$z_i = p_i \times \{0\}$, $w_i = p_i \times \{1/2\}$
\end{itemize}


For each $i$, $\alpha_i$ intersects $\beta_i$ in a single point in the region $S_{1/2}$ denoted $x_i$. Let $\bold{x} \in \mathbb{T}_{\boldsymbol{\beta}} \cap \mathbb{T}_{\boldsymbol{\alpha}}$ denote the generator\footnote{in the $CFK^-$-complex, we mean the generator $[\bold{x},0,\dots,0]$, in the notation of the previous sub-section.} having component $x_i$ on $\alpha _i$. The homology class $[\bold{x}] \in HFK^\circ(-Y,K)$ is an invariant of the transverse isotopy class of $K$, denoted $t^\circ(K)$ (Theorem 3.1 of \cite{equiv}); where $\circ\in\{\wedge,-\}$. The proof of invariance utilizes the Giroux correspondence \cite{giroux} and Theorem \ref{thm:markov}. Any two braidings of a transverse link $K$ about two open books are related by a sequence of braid isotopies, positive Markov stabilizations of the braids, and positive Hopf stabilizations of the open books. Each of these moves, along with arc slides relating bases of arcs, give rise to a quasi-isomorphisms of the underlying $CFK^\circ$ complexes preserving the distinguished generator $\bold{x}$, see \cite{equiv} for details. 

Given the pointed monodromy $g$, we can also consider the homology class of $\bold{x} \in \widetilde{CFK}(\mathcal{H})$, denoted $\widetilde{t}(g)\in \widetilde{HFK}(\mathcal{H})$. The proof of arc-slide invariance for the invariant $t^\circ$ can be used to show that $\widetilde{t}$ is an invariant of the pointed monodromy $g$. 

\begin{lemma}
\label{tildenonvanishing}
Suppose $g \in Mod(S\smallsetminus P,\partial S)$ has transverse closure $K_g\subset (Y_g,\xi_g)$. Then
\[
\widehat{t}(K_g)\ne 0 \iff \widetilde{t}(g)\ne 0.
\]
\end{lemma}
\begin{proof}
The natural injection $\iota: \widehat{HFK}(-Y_g,K_g)\xhookrightarrow{} \widetilde{HFK}(\mathcal{H})$ maps $\widehat{t}(K_g)$ to $\widetilde{t}(g)$.
\end{proof}



\subsection{Right-veering mapping classes and the fractional Dehn twist coefficient}
\label{subsec:FDTC}

The monoid of \emph{right-veering} diffeomorphisms in the mapping class group, along with the notion of \emph{fractional Dehn twist coefficient}, was introduced and studied by Honda, Kazez, and Mati\'{c}. We do not give the definition of the fractional Dehn twist coefficient, see sections $2$ and $3$ of \cite{RVD} for details.

Let $S$ denote an oriented compact surface with boundary.
Let $a,b\subset S\smallsetminus P$ be oriented properly embedded arcs such that $\partial a = \partial b \subset a\cap \partial S = b\cap \partial S$, as oriented 0-manifolds.
We apply an isotopy of the arcs, fixing their endpoints, so that they intersect transversally in the fewest number of possible intersections; in this case we say the arcs intersect \emph{efficiently}.
The arc $b$ is \emph{to the right of} $a$, denoted $a\le b$, if the two arcs can be made to intersect efficiently so that the tangent vectors $(\dot{b},\dot{a})$ form a positive basis for $T_p(S)$ at both points $p\in\partial a$. We say that $\psi\in Mod(S\smallsetminus P,\partial S)$ is \emph{right-veering} if $\psi(a)\ge a$ for every properly embedded arc $a\subset S\smallsetminus P$ meeting $\partial S$.

Suppose now that $\partial S$ is connected. We denote the \emph{fractional Dehn twist coefficient} of $\psi \in Mod(S\smallsetminus P,\partial S)$ by $c(\psi)$. The definition involves the Nielsen-Thurston classification of surface diffeomorphisms. 
Although the fractional Dehn twist coefficient was originally defined in the case that $P=\emptyset$, the definition carries over naturally when marked points are introduced.

Let $\tau_{\partial S}\in Mod(S\smallsetminus P,\partial S)$ denote a boundary parallel right-hand Dehn twist.
The following basic properties of the twist coefficient follow from the definition:

\begin{lemma} (Section 3 of \cite{RVD} and Proposition 4.10 \cite{IKess})
\label{lem:boundarytwisting}

\begin{itemize}
\item $c(\tau_{\partial S}^N \circ \psi) = N+ c(\psi)$
\item If $c(\psi)>0$ then $\psi$ is right-veering.
\end{itemize}
\end{lemma}


\subsection{Loose transverse links}
\label{subsec:loose}
A transverse link $K\subset (Y,\xi)$ is \emph{loose} if its complement $Y\smallsetminus K$ contains an overtwisted disk. We prove that the BRAID invariant of a loose transverse link vanishes by realizing it as the closure of a non right-veering pointed monodromy. This is similar to Theorem 1.4 of \cite{LOSS}.

\begin{theorem}
\label{thm:loose}
If $K\subset (Y,\xi)$ is a loose transverse link then $\widehat{t}(K)=0$.
\end{theorem}
\begin{proof}
By Lemma 5.1 of \cite{LOSS} we may decompose $(Y,\xi)$ as a contact connected sum $(Y_1,\xi_1)\# (S^3,\xi_2)$ such that $K\subset (Y_1,\xi_1)$ and $\xi_2$ is overtwisted.  
Let $g\in Mod(S \smallsetminus P,\partial S)$ denote a pointed monodromy specifying $K\subset (Y_1,\xi_1)$. 
Theorem 1.1 of \cite{RVD} guarantees the existence of an abstract open book $(\Sigma,\phi)$ supporting $(S^3,\xi_2)$ which is not right-veering.

Let $S\sharp \Sigma$ denote the boundary connected sum of surfaces. There is a natural pointed monodromy $h\in Mod\big{(}(S \smallsetminus P) \sharp\Sigma,\partial (S\sharp\Sigma)\big{)}$ which restricts to $g$ and $\phi$ on each of the two respective summands. By construction, the transverse closure of $h$ is $K\subset (Y,\xi)$, and $h$ is not right-veering. Theorem 1.4 of \cite{equiv} implies $\widehat{t}(K)=0$.

\end{proof}

\subsection{Transversely braiding the binding of an open book}
\label{subsec:braidingbinding}

We fix some notation and illustrate how to transversely braid the binding of an open book about itself.

Let $P = \{p_1,\dots,p_n\}$ denote a collection of marked points in the interior of an oriented surface $S$. 
Let $\gamma\subset S\smallsetminus \{P\cup \partial S\}$ denote a properly embedded arc connecting two points $p_i\ne p_j$ of $P$. 
We let $\sigma_\gamma$ denote the \emph{right-hand half twist about} $\gamma$; $\sigma_\gamma$ has support in a small neighborhood of $\gamma$, see Figure \ref{fig:halftwist}. 
If $\delta$ is a simple closed curve in $S\smallsetminus \{P\cup \partial S\}$, we let $\tau_\delta$ denote a \emph{right-hand Dehn twist about} $\delta$.

We will also make use of push-maps. Let $r$ denote an oriented properly embedded arc $\gamma\subset S\smallsetminus \{P\cup \partial S\}$ having endpoints at a single point $p\in P$. 
Let $\delta_1\cup \delta_2$ denote the oriented boundary of a small annular neighborhood of $r$, where the orientation of $\delta_1$ agrees with that of $r$.
The \emph{push-map} of $p$ along $r$, denoted $\pi_r$, is the composition $\tau_{\delta_1} \circ \tau_{\delta_2}^{-1}$. 
This map has a simple interpretation: the oriented arc $r$ specifies a movie of how the component of the closure of $\pi_r$ corresponding to the marked point $p$ intersects the pages of the open book as one flows through the fibration. Note that if one forgets the basepoint $p$, then the right and left-hand Dehn twists cancel, so $\pi_r$ does not change the underlying open book.

\begin{figure}[h]
\def\svgwidth{140pt}
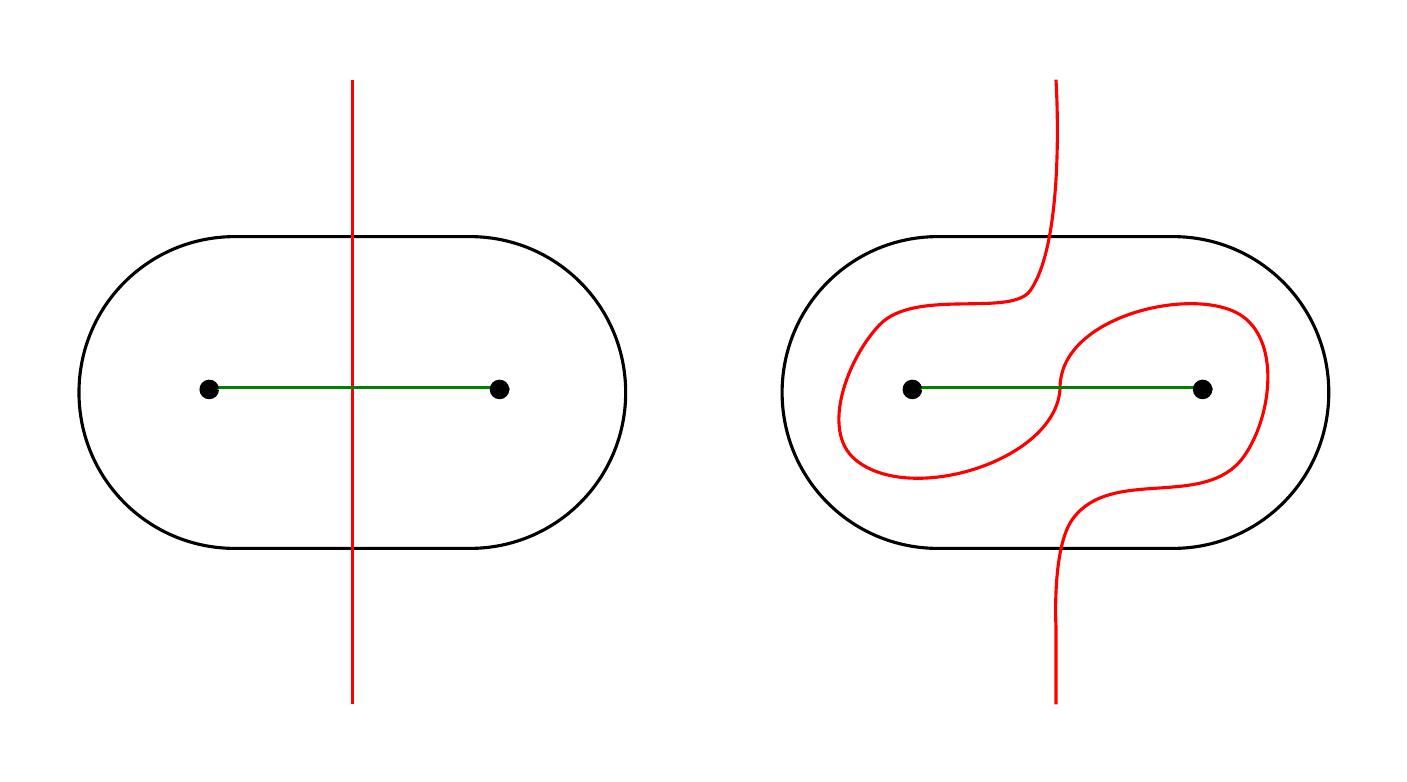
\caption{The right-hand half twist $\sigma_\gamma$ is supported in a neighborhood of the horizontal arc $\gamma$ and is determined by where it sends the vertical arc.}
\label{fig:halftwist}
\end{figure}

The binding $B$ of an open book supporting $(Y,\xi)$ is naturally a transverse link. We illustrate how to braid $B$ about $(B,\pi)$ via a transverse isotopy.

Any transverse knot $K$ admits a neighborhood contactomorphic to 
\[
N_\epsilon = \{(r,\theta,z):r<\epsilon\} \subset \mathbb{R}^2 \times S^1
\]
where $\xi = ker(\alpha) = ker(dz+r^2 d\theta)$, and $K$ is identified with $(0,0)\times S^1$. In these coordinates, $K$ admits a parametrization $\gamma (t) = (0,t,t)$, where $t\in [0,2\pi)$. Consider the transverse isotopy
\[
\Gamma _s (t) = (s,t,t)
\]
from $\gamma _0 (t)$ to $\gamma_{\epsilon /2} (t)$. Applying this isotopy to each component of $B$ realizes the component as an index 1 braid about $(B,\pi)$. 

Suppose $B$ has $n$ components. Let $(S,\phi)$ be an abstract open book corresponding to $(B,\pi)$, where $S$ has genus $g$. 
Abusing notation, we denote the index-$n$ braiding of $B$ obtained by the above procedure by $B$ as well. $B$ is specified by a lift $\widehat{\phi} \in Mod( S\smallsetminus \{p_1,\dots,p_n\},\partial S)$ of $\phi$. Thinking of $\phi$ as fixing a collar neighborhood $\nu(\partial S)$ of the boundary, one obtains $\widehat{\phi}$ by adding $n$ marked points and composing $\phi$ with $n$ push maps supported in $\nu (\partial S)$, each along an arc parallel to and oriented as the nearby component of $\partial S$. See Figure \ref{fig:one} for the push maps and a basis of arcs $\{a_i\}_1^{2g+n-1}\cup \{a_{2,i}\}_{2g+1}^{2g+n-1}$ for $S\smallsetminus \{p_1,\dots,p_n\}$.

\begin{figure}[h]

\def\svgwidth{170pt}
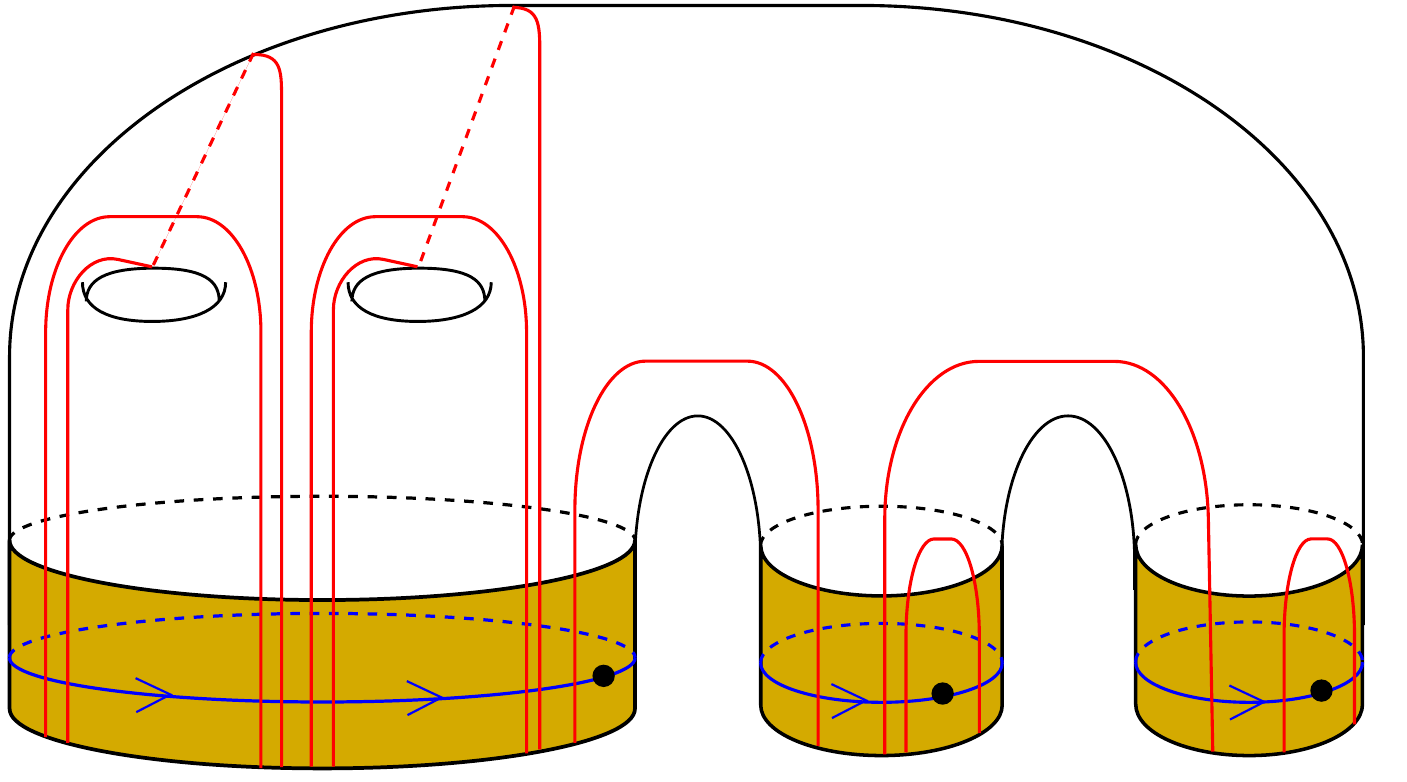
\caption{The basis of arcs $\{a_i\}_1^{2g+n-1}\cup \{a_{2,i}\}_{2g+1}^{2g+n-1}$ for the case $n=3$ and $g=2$ is depicted. The push maps along boundary parallel curves, supported in the shaded neighborhood $\nu (\partial S)$, are depicted as well.}
\label{fig:one}
\end{figure}

\begin{remark}
\label{remark:a}
The construction (of $\widehat{\phi})$ above can be adapted in a straight forward way, with only minor changes in notation, to give the following: Suppose $\partial S$ is connected.
Given a pointed monodromy $g\in Mod(S\smallsetminus \{p_1,\dots,p_n\},\partial S)$ having transverse closure $K$, one can construct a lift $\widehat{g}\in Mod(S\smallsetminus \{p,p_1,\dots,p_n\},\partial S)$ of $g$ having transverse closure $B\cup K$. The new point $p$ is the intersection of $B$, as an index one braid, with a page of the open book.
\end{remark}

The following lemma allows us to recognize certain instances when the closure of a pointed monodromy is transversely isotopic to braid union binding.
\begin{lemma}
\label{lemma:recognition}
Let $S$ be an oriented surface with connected boundary. Suppose that \[h\in Mod(S\smallsetminus \{p,p_1,\dots,p_n\},\partial S)\] fixes an annular neighborhood of $\partial S$ containing only the marked point $p$, then the closure of 
\[
\tau_{\partial S} \circ h
\]
is transversely isotopic to $B\cup K$, where $B$ is the binding of the underlying open book (of $\tau_{\partial S}\circ h$) and $K$ is some braid about this open book.
\end{lemma}
\begin{proof}

Since the pointed monodromy $h$ fixes an annular neighborhood of $\partial S$ containing $p$, it restricts to an element $g\in Mod(S\smallsetminus \{p_1,\dots,p_n\},\partial S)$.
By construction of $\widehat{\tau_{\partial S}\circ g}$ (see Remark \ref{remark:a}), and the definition of a push-map, it follows that $\tau_{\partial S}\circ h = \widehat{\tau_{\partial S}\circ g}$. 
\end{proof}

\subsection{Elementary non-vanishing results}
We establish some non-vanishing results for the BRAID invariant $\widehat{t}$ which play key roles in the proofs of Theorems \ref{thm:binding} and \ref{thm:largetwist}. 
\begin{proposition}
\label{prop:nonzero}
Let $S$ be an oriented, compact surface with boundary and some marked points. Let $K_\gamma$ denote the closure of a right-hand half twist along some embedded arc $\gamma$ connecting two marked points. Let $K_\delta$ denote the closure of a right-hand Dehn twist along any simple closed curve $\delta$. Then $\widehat{t}(K_\gamma),\widehat{t}(K_\delta) \ne 0.$

\end{proposition}
\begin{proof}

We first prove that $\widehat{t}(K_\gamma)$ is non-zero. One can choose a basis of arcs $\{a_i\}$ for $S\smallsetminus P$ such that $a_1$ intersects $\gamma$ in a single point, and all other basis arcs $\{a_i|i>1\}$ are disjoint from $\gamma$. We use this basis of arcs to obtain a Heegaard diagram $\mathcal{H}$ encoding $(-Y,K_\gamma)$ as in Subsection \ref{subsec:HFK}. The resulting complex $\widehat{CFK}(\mathcal{H})$ has vanishing differential, so the result follows.

Next consider we consider a right-hand Dehn twist $\tau_\delta$ along a simple closed curve $\delta$. If $\delta$ is non-separating in $S$, we may construct a basis of arcs $\{a_i\}$ for $S\smallsetminus P$ such that $a_1$ intersects $\delta$ in a single point, and all other basis arcs $\{a_i|i>1\}$ are disjoint from $\delta$. The $\widehat{CFK}$ complex coming from the corresponding diagram will again have vanishing differential.

We are left to consider the case that $\delta$ separates a region $R\subset S$ from $\partial S$. 
Consider the surface $R'$ obtained by removing some small neighborhoods $D_i$ of the points in $p_i\in P\cap R$.
By the chain relation (see Proposition 4.12 of \cite{primer}), $\tau_\delta$ can be expressed as a product of right-hand Dehn twists about non-separating curves in $R'$, along with left-hand Dehn twists about the boundaries of the disks $D_i$. The latter are trivial as elements of the pointed mapping class group. Theorem \ref{thm:comultiplication} and the previous case imply $\widehat{t}(K_\delta)\ne 0$.

\end{proof}


\section{Comultiplicativity of the BRAID invariant}
\label{sec:comultiplicaiton}

We establish some comultiplicativity statements for the BRAID invariant. The proof is an adaptation of the argument for the comultiplicativity of the contact class \cite{comultcontact}, and also generalizes the comultiplicativity of the GRID invariant \cite{comultgrid}.


Let $S$ be a surface with boundary and $P = \{p_1,\dots,p_n\}\subset S$ be a collection of marked points. Let $K_g\subset (Y_g,\xi_g), K_h\subset (Y_h,\xi_h)$, and $K_{hg}\subset (Y_{hg},\xi_{hg})$ be transverse links specified by elements $g,h\in Mod(S\smallsetminus P,\partial S)$ as in Subsection \ref{subsec:contact}.


\begin{theorem}
\label{thm:comult}
There exists a natural comultiplication homomorphism 
\[
\mu^-_*:HFK^-(-Y_{hg},K_{hg})\to HFK^-(-Y_{g},K_{g})\otimes HFK^-(-Y_{h},K_{h}),
\]
of $\mathbb{Z}_2 [U_1,\dots,U_n]$-modules, sending $t^-(K_{hg})$ to $t^-(K_g) \otimes t^-(K_h)$. 
\end{theorem}
\begin{remark}
Recall that the $U$-actions corresponding to the same component of the link act identically on $HFK^-$, however when composing pointed monodromies it is usually not practical to keep track of this. In practice, this theorem is more useful when one quotients $HFK^-$ by the action of $(U_i-U_j)$ for every $i,j$. This has the effect of identifying all the $U$-variable actions and endows the result with the structure of a $\mathbb{Z}_2 [U]$-module. The map $\mu^-_*$ of the Theorem then induces a map of these $\mathbb{Z}_2 [U]$-modules. Alternatively, one can choose to work with the collapsed knot Floer homology, where all of the variables are identified on the chain level; the proof carries through in this setting.

\end{remark}

Let $\{a_i\}$ and $\{b_i\}$ denote the bases of arcs described in Subsection \ref{subsec:HFK}. We construct a third basis of arcs $\{c_i\}$ from by applying a small isotopy moving the endpoints of arcs in $\{b_i\}$ along $\partial S$ in the direction given by the boundary orientation. We require that both $a_i$ and $b_i$ intersect $c_i$ transversely in a single point with positive sign.
We construct three sets of curves and two sets of basepoints on the Heegaard surface $\Sigma = S_{1/2}\cup -S_0$:

\begin{itemize}
\item
$\boldsymbol{\alpha} = \{\alpha_i\} = \{a_i\times \{0,1/2\}\}$ 
\item
$\boldsymbol{\beta} = \{\beta_i\} = \{b_i\times \{1/2\}\cup g(b_i)\times \{0\}\}$
\item
$\boldsymbol{\gamma} = \{\gamma_i\} = \{c_i\times \{1/2\}\cup h(g(c_i)))\times \{0\}\}$
\item
$\bold{w} = \{w_i\} = \{p_i \times \{1/2\}\} \text{ and }\bold{z} = \{z_i\} = \{p_i \times \{0\}\}$.
\end{itemize}

\noindent The Heegaard diagrams $\mathcal{H}_g= (\Sigma, \boldsymbol{\beta},\boldsymbol{\alpha},\bold{w},\bold{z})$, $\mathcal{H}_h= (\Sigma, \boldsymbol{\gamma},\boldsymbol{\beta},\bold{w},\bold{z})$ and $\mathcal{H}_{hg}= (\Sigma, \boldsymbol{\gamma},\boldsymbol{\alpha},\bold{w},\bold{z})$ encode $(-Y_g,K_g)$, $(-Y_h,K_h)$ and $(-Y_{hg},K_{hg})$, respectively. There are unique generators $\bold{x}_g \in \mathbb{T}_{\boldsymbol{\alpha}}\cap \mathbb{T}_{\boldsymbol{\beta}}$, $\bold{x}_h\in \mathbb{T}_{\boldsymbol{\beta}}\cap \mathbb{T}_{\boldsymbol{\gamma}}$ and $\bold{x}_{hg}\in \mathbb{T}_{\boldsymbol{\alpha}}\cap \mathbb{T}_{\boldsymbol{\gamma}}$ having all components in the region $S_{1/2}$. 
By construction, the homology classes of $\bold{x}_g,\bold{x}_h$ and $\bold{x}_{hg}$ 
are the braid invariants $t^\circ(K_g), t^\circ(K_h)$ and $t^\circ(K_{hg})$, respectively. 

There is a version of comultiplication for the totally blocked theory, which is what we use for the proof of Theorem \ref{thm:largetwist}. We prove this result then return to the proof of Theorem \ref{thm:comult}.
\begin{theorem}
\label{thm:comultiplication}
There exists a natural comultiplication homomorphism
\[
\widetilde{\mu}_*: \widetilde{HFK}(\mathcal{H}_{hg})\to \widetilde{HFK}(\mathcal{H}_g)\otimes \widetilde{HFK}(\mathcal{H}_h),
\]
sending $\widetilde{t}(hg)$ to $\widetilde{t}(g)\otimes \widetilde{t}(h)$.
In particular, $\widehat{t}(K_g),\widehat{t} (K_h) \ne 0$ implies that $\widehat{t}(K_{hg})\ne 0$.
\end{theorem}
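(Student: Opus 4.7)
The plan is to mimic, essentially verbatim, Baldwin's proof of the comultiplicativity of the Heegaard Floer contact invariant \cite{comultcontact}, substituting the marked-point basis of arcs from \S\ref{subsec:HFK} for the usual open book basis. The entire argument reduces to producing a Heegaard triple diagram that simultaneously represents $(-Y_g,K_g)$, $(-Y_h,K_h)$, and $(-Y_{hg},K_{hg})$, and carrying out a small-triangle count among the three associated BRAID generators.

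To build the triple, I would choose on the marked surface $(S,P)$ three bases of arcs $\{a_i\}$, $\{b_i\}$, $\{c_i\}$, each obtained from the previous by pushing endpoints a small amount along $\partial S$ in the positively oriented direction, so that $a_i\cap b_i$, $b_i\cap c_i$, and $a_i\cap c_i$ each consist of a single transverse positive intersection point bounding a small triangular region $T_i\subset S\ssm P$. On the Heegaard surface $\Sigma=S_{1/2}\cup -S_0$ I set
\[
\alpha_i=a_i\times\{0,\tfrac12\},\qquad \beta_i=b_i\times\{\tfrac12\}\cup\widehat{g}(b_i)\times\{0\},\qquad \gamma_i=c_i\times\{\tfrac12\}\cup\widehat{hg}(c_i)\times\{0\},
\]
with basepoints $w_i=p_i\times\{\tfrac12\}$ and $z_i=p_i\times\{0\}$ as in \S\ref{subsec:HFK}. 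By construction $(\Sigma,\boldsymbol{\alpha},\boldsymbol{\beta},\bw,\bz)$ and $(\Sigma,\boldsymbol{\alpha},\boldsymbol{\gamma},\bw,\bz)$ are the BRAID diagrams for $(-Y_g,K_g)$ and $(-Y_{hg},K_{hg})$ respectively; pulling back the lower page by $\widehat{g}^{-1}$ identifies $(\Sigma,\boldsymbol{\beta},\boldsymbol{\gamma},\bw,\bz)$ with the BRAID diagram for $(-Y_h,K_h)$ coming from the lift $\widehat{h}=\widehat{hg}\cdot\widehat{g}^{-1}$ of $h$.

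Let $\bx^{\alpha\beta}$, $\bx^{\beta\gamma}$, $\bx^{\alpha\gamma}$ denote the generators assembled from the single intersection points in $S_{1/2}$; these represent $\widehat{t}(K_g)$, $\widehat{t}(K_h)$, $\widehat{t}(K_{hg})$. The regions $T_i$ assemble into a single Whitney triangle class $\psi_0\in\pi_2(\bx^{\alpha\beta},\bx^{\beta\gamma},\bx^{\alpha\gamma})$ of Maslov index zero, with domain embedded in $S_{1/2}$ and disjoint from $\bw\cup\bz$. The Riemann mapping theorem gives it a unique holomorphic representative, contributing $1\in\zt$ to the triangle count. The key technical point, as in \cite{comultcontact}, is to verify that no other triangle class in the relevant \Sc~structure contributes at the chain level: any competitor has positive multiplicity at some basepoint or has Maslov index strictly larger than $0$. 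This is the main obstacle in the proof, but it is handled by the same positivity-of-domains and index arguments used in \cite{comultcontact,comultgrid}.

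Together with the reversed-cobordism interpretation of this triple, the triangle count yields the desired chain-level comultiplication
\[
\tilde{\mu}:\widehat{HFK}(-Y_{hg},K_{hg})\longrightarrow \widehat{HFK}(-Y_g,K_g)\otimes \widehat{HFK}(-Y_h,K_h),
\]
and by design $\tilde{\mu}(\widehat{t}(K_{hg}))=\widehat{t}(K_g)\otimes\widehat{t}(K_h)$. The nonvanishing statement is then immediate: if both tensor factors are nonzero, then so is their preimage $\widehat{t}(K_{hg})$.
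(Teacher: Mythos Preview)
Your proposal is correct and follows the same approach as the paper: build the triple diagram from three successively shifted arc bases, identify the three BRAID generators in $S_{1/2}$, and count the obvious small triangles. The paper carries out the two technical checks you defer a bit more explicitly---it verifies weak admissibility of the triple (which you do not mention and which is needed for the triangle map to be defined) and establishes uniqueness by a local domain-multiplicity argument showing any competing triangle has a \emph{negative} coefficient, rather than by the Maslov-index/basepoint dichotomy you sketch---but the substance is the same.
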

\begin{proof}
The statement involving the $\widehat{t}$ invariant will follow from the comultiplicativity of the totally blocked invariant, by Lemma \ref{tildenonvanishing}.
For a Heegaard diagram $\mathcal{H}$, let $-\mathcal{H}$ denote the diagram obtained by interchanging the two sets of attaching curves, e.g. $-\mathcal{H}_g = (\Sigma, \boldsymbol{\alpha},\boldsymbol{\beta},\bold{w},\bold{z})$.
If the Heegaard triple diagram $(\Sigma,\boldsymbol{\alpha},\boldsymbol{\beta},\boldsymbol{\gamma},\bold{w},\bold{z})$ is \emph{weakly-admissible} there is a multiplication chain map
\[
\widetilde{m}: \widetilde{CFK}(-\mathcal{H}_g)\otimes \widetilde{CFK}(-\mathcal{H}_h) \to \widetilde{CFK}(-\mathcal{H}_{hg})
\]
defined on the generators by counting pseudo-holomorphic triangles
\[
\widetilde{m}(\bold{a}\otimes \bold{b}) = \sum\limits_{ \bold{x}\in \mathbb{T}_{\boldsymbol{\alpha}}\cap\mathbb{T}_{\boldsymbol{\gamma} }} \sum\limits_{\substack{\phi\in\pi_2 (\bold{a},\bold{b},\bold{x})\\ \mu(\phi)=0\\ n_\bold{z}(\phi) = n_\bold{w}(\phi) = 0}}  (\# \widehat{\mathcal{M}}(\phi)) \bold{x}.
\]
Applying the $\text{Hom}_{\mathbb{Z}_2} (-,\mathbb{Z}_2)$ functor to the above yields a comultiplication chain map
\[
\widetilde{\mu}: \widetilde{CFK}(\mathcal{H}_{hg}) \to \widetilde{CFK}(\mathcal{H}_g)\otimes \widetilde{CFK}(\mathcal{H}_h)
\]
defined on generators by 

\[
\widetilde{\mu}(\bold{x}) = \sum\limits_{\substack{\bold{a}\in\mathbb{T}_{\boldsymbol{\alpha} \cap\mathbb{T}_{\boldsymbol{\beta}}}}\text{ }\\
 {\bold{b}\in\mathbb{T}_{\boldsymbol{\beta} \cap\mathbb{T}_{\boldsymbol{\gamma}}}}}
 \sum\limits_{\substack{\phi\in\pi_2 (\bold{a},\bold{b},\bold{x})\\ \mu(\phi)=0\\ n_\bold{z}(\phi) = n_\bold{w}(\phi) = 0}}  (\# \widehat{\mathcal{M}}(\phi)) \bold{a}\otimes \bold{b}.
\]

To prove Theorem \ref{thm:comultiplication} it suffices to show that the triple diagram above is weakly admissible, and that $\mu (\bold{x}_{hg}) = \bold{x}_g\otimes \bold{x}_h$. For the latter, we argue there is a unique pair of generators $(\bold{a}\in \mathbb{T}_{\boldsymbol{\alpha}}\cap \mathbb{T}_{\boldsymbol{\beta}}, \bold{b} \in \mathbb{T}_{\boldsymbol{\beta}}\cap \mathbb{T}_{\boldsymbol{\gamma}})$ for which there exists a Whitney triangle $\phi \in \pi_2 (\bold{a},\bold{b},\bold{x}_{hg})$ satisfying $n_\bold{w} (\phi) = 0$ admitting a holomorphic representative. We will see that $\bold{a} = \bold{x}_g, \bold{b} = \bold{x}_h$, that the Whitney triangle is unique, and that the triangle has a single holomorphic representative contributing to $\widetilde{m}(\bold{a}\otimes \bold{b})$.

\begin{definition}
Let $D_1,\dots,D_m$ denote the connected regions of $\Sigma \smallsetminus \{\boldsymbol{\alpha}\cup\boldsymbol{\beta}\cup\boldsymbol{\gamma}\}$. A \emph{triply-periodic domain} of a pointed Heegaard triple diagram $(\Sigma,\boldsymbol{\alpha},\boldsymbol{\beta},\boldsymbol{\gamma},\bold{z},\bold{w})$ is a formal linear combination $P = \sum_i p_i D_i$ such that $ n_\bold{w}(P) = 0$ and $\partial P = \sum_i p_i\partial D_i$ is a linear combination of complete $\boldsymbol{\alpha},\boldsymbol{\beta}$ and $\boldsymbol{\gamma}$ curves.
\end{definition}

\begin{definition}
A pointed Heegaard diagram is said to be \emph{weakly-admissible} if every non-trivial triply-periodic domain $P$ has both positive and negative coefficients.
\end{definition}

For each $i$, the triple of curves $\alpha_i$, $\beta_i$ and $\gamma_i$ intersect form the arrangement on $S_{1/2}$ pictured in Figure \ref{fig:admissibility}.
Suppose that $P = \sum_i p_i D_i$ is a triply periodic domain of $(\Sigma,\boldsymbol{\alpha},\boldsymbol{\beta},\boldsymbol{\gamma},\bold{z},\bold{w})$. The regions $D_1$ and $D_6$ will always contain points of $\bold{w}$ (for some values of $i$ the regions $D_1$ and $D_6$ coincide), therefore $p_1 = p_6 = 0$. Since $\partial P$ contains some total number of $\boldsymbol{\alpha}$ curves, we have that $p_7 = p_4-p_5 = -p_2$. If $p_7 \ne 0$ we have that the domain $P$ has both positive and negative coefficients, so assume that $p_7 = p_2 =0$ and $p_4=p_5$. Because $\partial P$ contains some total number of $\boldsymbol{\beta}$ curves it now follows that $p_4 =p_5 = -p_3$. Either $P$ is to have both positive and negative coefficients, or we have that $p_1 = p_2 = \dots = p_7 = 0$. Since all $\boldsymbol{\alpha}$, $\boldsymbol{\beta}$ and $\boldsymbol{\gamma}$ curves meet the region $S_{1/2}$ it is clear that any non-trivial triply periodic domain must have both positive and negative coefficients. We conclude that the diagram is weakly-admissible.

\begin{figure}[h]
\def\svgwidth{180pt}
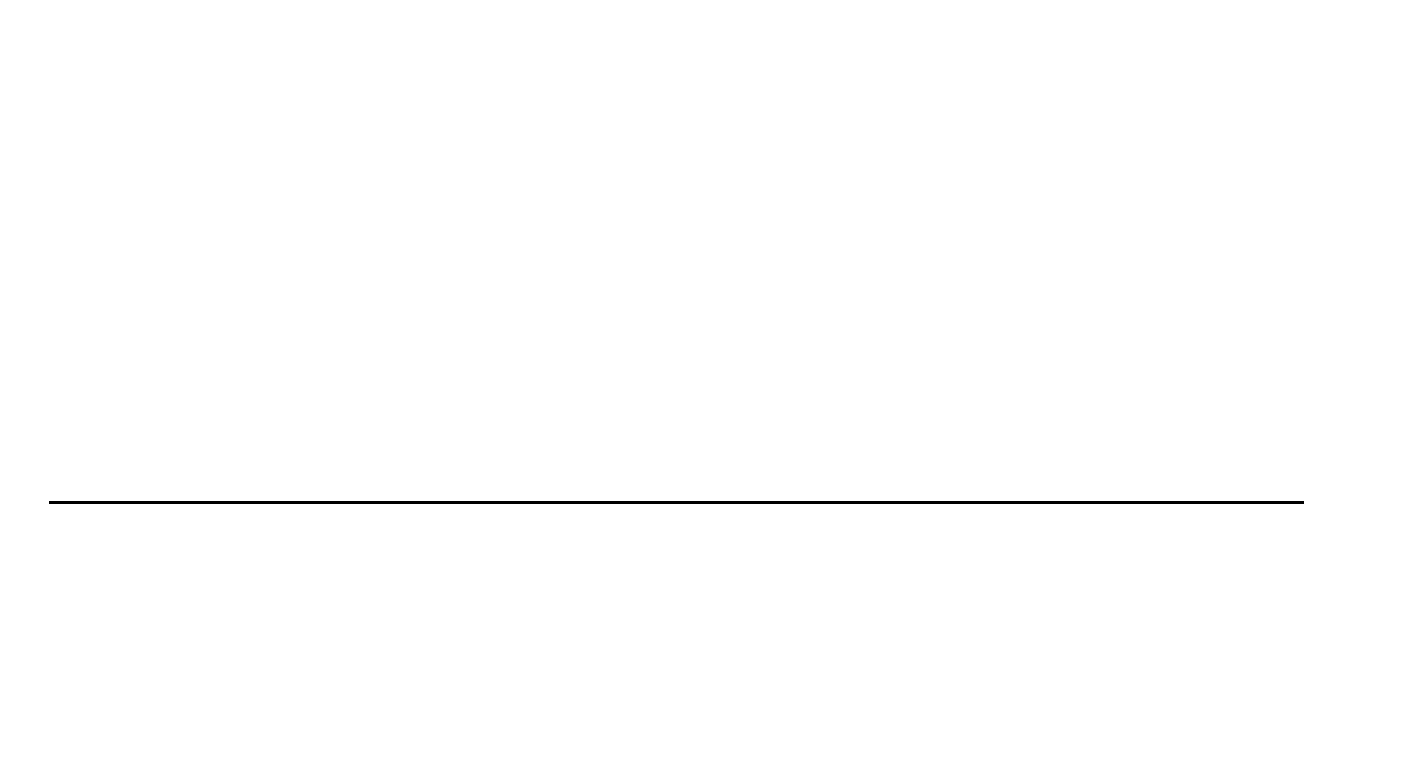
\caption{The points $(\bold{x}_{g})_i$, $(\bold{x}_{h})_i$, and $(\bold{x}_{hg})_i$ are depicted by a square, dot and star, respectively.}
\label{fig:admissibility}
\end{figure}
We turn to the computation of $\widetilde{\mu} (\bold{x}_{hg})$.

Let $(\bold{x}_{g})_i$, $(\bold{x}_{h})_i$, and $(\bold{x}_{hg})_i$ denote the components of $\bold{x}_g$, $\bold{x}_h$, and $\bold{x}_{hg}$ on $\alpha_i$, $\beta_i$, and $\gamma_i$, respectively. Suppose that $\phi\in\pi_2 (\bold{a},\bold{b},\bold{x}_{hg})$ is a Whitney triangle admitting a holomorphic representative with $n_\bold{z}(\phi) = n_\bold{w}(\phi)=0$. Let $D = \sum_i p_i D_i$ denote the domain of $\phi$. $n_\bold{w}(\phi)=0$ immediately implies that $p_1 = p_6 = 0$.

Because $(\bold{x}_{hg})_i$ is a corner of $\phi$, a standard holomorphic polygon counting argument gives that $p_5 = p_2+p_4 +1$. Suppose that $\phi$ has no corner at $(\bold{x}_g)_i$; in this case it follows that $p_5+p_7 = p_4$. Subtracting the second equation from the first yields $-p_7 = p_2 +1$, in which case either $p_7$ or $p_2$ must be negative, and $\phi$ can not admit a holomorphic representative. Thus $\phi$ has a corner at $(\bold{x}_g)_i$. A similar argument shows that $\phi$ must have a corner at $(\bold{x}_h)_i$, and that $p_5=1$ is the only nonzero multiplicity of $D$ pictured in Figure \ref{fig:admissibility}. 

Applying this argument for each local configuration as in the figure it follows that $\bold{a}=\bold{x}_g$, $\bold{b}=\bold{x}_h$, and that $D$ is a union of small triangles, in which case $\phi$ must have a unique holomorphic representative. We conclude that $\widetilde{\mu} (\bold{x}_{hg}) = \bold{x}_g\otimes \bold{x}_h$.
\end{proof}

\begin{proof}[Proof of Theorem \ref{thm:comult}]
The proof for the minus theory is similar. Because the triple diagram is weakly admissible, there is a natural multiplication chain map
\[
m^{\ge 0}: CFK^{\ge 0}(-\mathcal{H}_g)\otimes CFK^{\ge 0}(-\mathcal{H}_h) \to CFK^{\ge 0}(-\mathcal{H}_{hg}), 
\]
defined by counting pseudo-holomorphic triangles
\begin{gather*}
m^{\ge 0}([\bold{a},i_1,\dots,i_n]\otimes [\bold{b},i_1',\dots,i_n']) =\\ 
\sum\limits_{ \bold{x}\in \mathbb{T}_{\boldsymbol{\alpha}}\cap\mathbb{T}_{\boldsymbol{\gamma} }} \sum\limits_{\substack{\phi\in\pi_2 (\bold{a},\bold{b},\bold{x})\\ \mu(\phi)=0\\  n_\bold{w}(\phi) = 0}}  (\# \widehat{\mathcal{M}}(\phi)) [\bold{x},i_1+i_1'-n_{z_1}(\phi),\dots,i_n+i_n'-n_{z_n}(\phi)].
\end{gather*}
Applying the $Hom_{\mathbb{Z}_2 [U_1,\dots,U_n]}(-,\mathbb{Z}_2 [U_1,\dots,U_n,U_1 ^{-1},\dots, U_n ^{-1}]/\mathbb{Z}_2 [U_1,\dots,U_n])$ functor, and identifying $CFK^{\ge 0} (-\mathcal{H})^\vee$ with $CFK^- (\mathcal{H})$ we obtain a map
\[
\mu^-:= (m^{\ge 0})^\vee : CFK^-(\mathcal{H}_{hg})\to CFK^-(\mathcal{H}_{g})\otimes CFK^-(\mathcal{H}_{h}).
\]
The proof that $\widetilde{\mu} (\bold{x}_{hg}) = \bold{x}_g\otimes \bold{x}_h$ only used that the holomorphic triangles contributing to $\widetilde{m}$ satisfy $n_\bold{w} (\phi) = 0$; in particular, the same proof also shows that ${\mu}^- (\bold{x}_{hg}) = \bold{x}_g\otimes \bold{x}_h$.
\end{proof}

\section{the transverse invariant of a braid and its axis}
\label{sec:binding}

In this section we prove Theorem \ref{thm:binding} in three steps. First, we establish the result for the case that there is no braid and the underlying open book monodromy is simple, see Proposition \ref{prop:identity} and Lemma \ref{lemma:negative}. Second, we include the braid in the picture, see Lemma \ref{lemma:braid}. Finally, we upgrade to general open book monodromy via an application of Theorem \ref{thm:comultiplication}. 

Suppose that $(B,\pi)$ is an open book decomposition, with $B$ having $n$ components, supporting $(Y,\xi)$. Let $(S,\phi)$ be the abstract open book corresponding to $(B,\pi)$, where $S$ has genus $g$. As discussed in Subsection \ref{subsec:braidingbinding} the binding $B$ is naturally a transverse link that may be braided about the open book via a transverse isotopy and is specified by some lift $\widehat{\phi}\in Mod( S\smallsetminus \{p_1,\dots,p_n\},\partial S)$ of $\phi$, constructed by composing $\phi$ with some push-maps. See Figure \ref{fig:one} for the basis of arcs $\{a_i\}_1^{2g+n-1}\cup \{a_{2,i}\}_{2g+1}^{2g+n-1}$ for $S\smallsetminus \{p_1,\dots,p_n\}$.




The basis of arcs along with $\widehat{\phi}$ specify a Heegaard diagram $\mathcal{D}= (\Sigma, \boldsymbol{\beta}, \boldsymbol{\alpha}, \bold{w}_B, \bold{z}_B) $, along with a generator $\bold{x}_\mathcal{D}$, shown on the left side of Figure $\ref{fig:two}$ for $(-Y,B)$, as in subsection \ref{subsec:HFK}. The indexing of the basis arcs induces a labelling of the $\boldsymbol{\beta}$ and $\boldsymbol{\alpha}$ curves. The homology class $[\bold{x}_\mathcal{D}]$, in $\widehat{HFK}(\mathcal{D})$, is the braid invariant $\widehat{t}(B)$. 
Applying an isotopy to $\mathcal{D}$ we obtain the diagram depicted on the right side of Figure \ref{fig:two}, still denoted $\mathcal{D}$.

\begin{figure}[h]
\def\svgwidth{400pt}
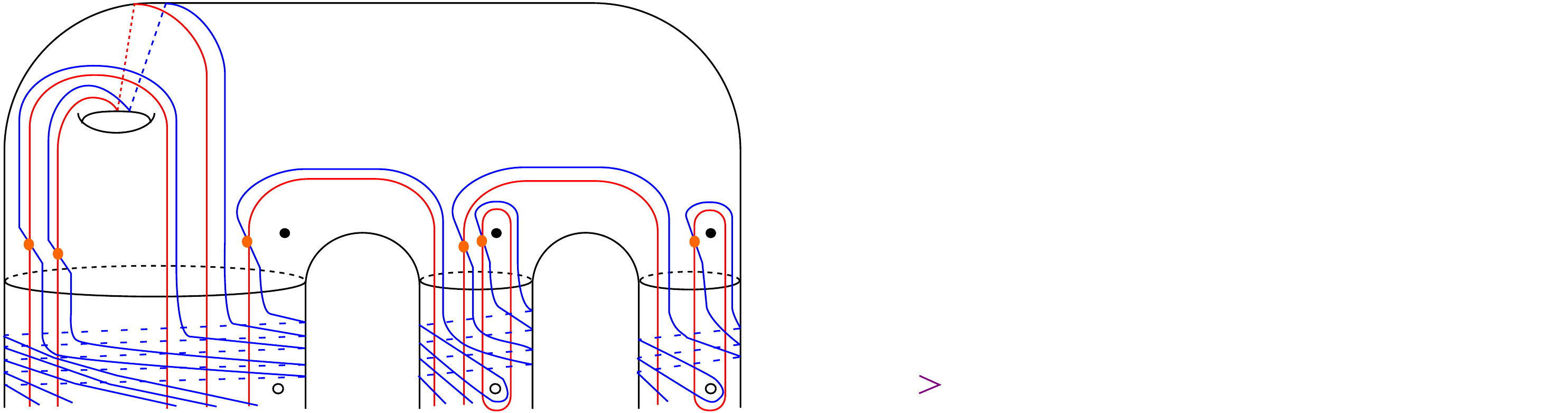
\caption{A portion of the Heegaard diagram $\mathcal{D}$ for $(-Y,B)$ in the case $g=1$ and $n=3$. The $\bold{w}_B$ basepoints and $\bold{z}_B$ basepoints are solid and hollow dots, respectively. The homology class of the generator depicted by orange dots is $\widehat{t}(B)\in \widehat{HFK}(\mathcal{D})$.}

\label{fig:two}
\end{figure}


The following notion was introduced in \cite{QOB} and is very useful for studying the relative Alexander grading.
Suppose $(\Sigma, \boldsymbol{\alpha},\boldsymbol{\beta},\bold{z}_1\cup\bold{z}_2,\bold{w}_1\cup\bold{w}_2)$ is a Heegaard diagram encoding a link $L_1\cup L_2 = L \subset Y$, where the basepoints $\bold{z}_{L_i}$ and $\bold{w}_{L_i}$ encode the sublink $L_i$.
Let $\lambda_i \subset \Sigma$ be a longitude for $L_i$ constructed by connecting the points of $\bold{z}_{L_i}$ to the $\bold{w}_{L_i}$ by oriented arcs $\{\gamma_z |z\in \bold{z}_{L_i}\}$ in the complement of $\boldsymbol{\alpha}$ curves and gluing together with arcs $\{\gamma_w |w\in \bold{w}_{L_i}\}$ connecting the $\bold{w}_{L_i}$ with the $\bold{z}_{L_i}$ in the complement of the $\boldsymbol{\beta}$ curves.
Let $D_1,\dots,D_r$ denote the closures of connected components of $\Sigma \setminus (\lambda_i\cup\boldsymbol{\alpha}\cup\boldsymbol{\beta})$. A \emph{relative periodic domain} is a 2-chain $\mathcal{P} = \Sigma a_i D_i$, whose boundary satisfies
\[
\partial \mathcal{P} = \lambda _i +\sum n_i \alpha _i + \sum m_i \beta_i.
\]

By capping off capping off the $\boldsymbol{\alpha}$ and $\boldsymbol{\beta}$ curves appearing in $\partial \mathcal{P}$ with discs one obtains an immersed surface, and hence a relative second homology class in the complement of $L_i$.


\begin{lemma} (see Lemma 2.3 of \cite{QOB})
\label{lemma:relperiodic}
Let $L_i\subset L$ be as in the definition above. Let $\mathcal{P}$ be a relative periodic domain whose homology class agrees with that of a Seifert surface $F$ for $L_i$. For $\bold{x},\bold{y}\in \mathbb{T}_{\boldsymbol{\alpha}}\cap \mathbb{T}_{\boldsymbol{\beta}}$, we have
\[
A_{L_i}(\bold{x})-A_{L_i}(\bold{y}) = n_\bold{x} (\mathcal{P})-n_\bold{y}(\mathcal{P})
\]
where the Alexander grading above is defined using the surface $F$.
\end{lemma}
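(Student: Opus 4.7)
The plan is to show that both sides of
\[
A_{L_i}(\bold{x}) - A_{L_i}(\bold{y}) = n_\bold{x}(\mathcal{P}) - n_\bold{y}(\mathcal{P})
\]
equal the same signed intersection pairing on the Heegaard surface $\Sigma$. To reformulate the right-hand side, choose a $1$-chain $\delta$ on $\Sigma$ with $\partial \delta = \bold{x} - \bold{y}$, taken as a union of arcs along the $\boldsymbol{\alpha}$-curves connecting the coordinates of $\bold{y}$ to those of $\bold{x}$. A standard ``walking along $\delta$'' computation --- tracking how the local multiplicity of $\mathcal{P}$ jumps each time $\delta$ crosses a component of $\partial \mathcal{P}$, with jump equal to the coefficient of that component --- yields
\[
n_\bold{x}(\mathcal{P}) - n_\bold{y}(\mathcal{P}) = \delta \cdot \partial \mathcal{P}
\]
as a signed intersection after a small transverse perturbation. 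Because $\delta$ lies in the pairwise disjoint family $\boldsymbol{\alpha}$, the $\alpha_j$ contributions from $\partial\mathcal{P} = p\lambda_i + \sum n_j \alpha_j + \sum m_k \beta_k$ drop out; a further small isotopy of $\delta$ off the $\boldsymbol{\beta}$-curves kills the $\beta_k$ contributions. What remains is a multiple of $\delta \cdot \lambda_i$.

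Next, I would identify the left-hand side with the same intersection number by applying the standard formula for the rational Alexander grading in terms of Seifert surface pairings (as developed by Ozsv\'{a}th--Szab\'{o} and Ni for rationally null-homologous links). Since $\lambda_i$ is built from arcs $\gamma_z$ joining each $z \in \bold{z}_{L_i}$ to the corresponding $w \in \bold{w}_{L_i}$ in the complement of $\boldsymbol{\alpha}$, together with arcs $\gamma_w$ in the complement of $\boldsymbol{\beta}$, the pairing $\delta \cdot \lambda_i$ records exactly the algebraic count of $L_i$-basepoints crossed by the $\boldsymbol{\alpha}$-arcs of $\delta$. Since $[\mathcal{P}] = [F]$ in $H_2(Y \setminus \nu(L_i), \partial)$, the Seifert normalization for $A_{L_i}$ matches that of $\mathcal{P}$, and we obtain $A_{L_i}(\bold{x}) - A_{L_i}(\bold{y}) = \delta \cdot \lambda_i$.

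The main technical obstacle is bookkeeping in the rationally null-homologous setting where $[L_i]$ has order $p > 1$: a Whitney disk from $\bold{x}$ to $\bold{y}$ need not exist, and the rational Alexander grading is only well-defined once normalized by $F$. Working consistently with $\mathbb{Q}$-coefficient chains (or equivalently passing to a $p$-fold disk), the factor $p$ in $\partial\mathcal{P} = p\lambda_i + \cdots$ precisely absorbs the $1/p$ normalization appearing in the rational Alexander grading, so the two reformulations above coincide and the lemma follows.
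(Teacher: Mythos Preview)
The paper does not actually prove this lemma; it is quoted essentially verbatim from the cited reference (Lemma~2.3 of \cite{QOB}), so there is no in-paper argument to compare against. I will therefore evaluate your sketch on its own merits.

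Your overall strategy---reduce both sides to an intersection pairing on $\Sigma$ involving the longitude $\lambda_i$---is the right one, and the ``walking along $\delta$'' interpretation of $n_\bold{x}(\mathcal{P})-n_\bold{y}(\mathcal{P})$ is standard. The gap is the sentence ``a further small isotopy of $\delta$ off the $\boldsymbol{\beta}$-curves kills the $\beta_k$ contributions.'' This step does not work: the endpoints of $\delta$ are the coordinates of $\bold{x}$ and $\bold{y}$, which lie on $\boldsymbol{\alpha}\cap\boldsymbol{\beta}$, so you cannot isotope $\delta$ rel endpoints to be disjoint from $\boldsymbol{\beta}$, and the signed count $\delta\cdot\beta_k$ is not even well-defined as an invariant of $\delta$ when $\partial\delta\subset\beta_k$. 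Concretely, when you walk along an $\alpha$-arc of $\delta$ and arrive at a coordinate of $\bold{x}$ sitting on $\beta_k$, the averaged local multiplicity $n_\bold{x}(\mathcal{P})$ picks up a $\pm m_k/2$ correction from the two quadrants across $\beta_k$; these half-integer $\beta$-contributions are genuinely present and do not isotope away.

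The usual repair is to symmetrize: in addition to the $\alpha$-multiarc $a$ from $\bold{y}$ to $\bold{x}$, take a $\beta$-multiarc $b$ from $\bold{x}$ to $\bold{y}$ and work with the closed $1$-cycle $\epsilon(\bold{x},\bold{y})=a\cup b$ (equivalently, with the boundary of a domain $D(\phi)$ when $\phi\in\pi_2(\bold{x},\bold{y})$ exists). For a closed cycle the pairing with each closed $\alpha_j$ and $\beta_k$ is well-defined, and the half-corrections at the corners from the $a$-walk and the $b$-walk cancel against each other; what survives is exactly the pairing with $p\lambda_i$, which then matches $p\big(n_{\bold{z}_{L_i}}(\phi)-n_{\bold{w}_{L_i}}(\phi)\big)$ and hence the rational Alexander grading difference. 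Your third paragraph already anticipates the $1/p$ bookkeeping correctly; once you replace the faulty isotopy step with this closed-cycle argument, the proof goes through.
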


In this paper we consider only the Alexander grading induced by the binding of an open book. The choice of Seifert surface will always be a page of the open book, so we continue to suppress the surface from the notation.

\begin{lemma}
\label{lemma:alex}
The Alexander grading of a generator of $\widehat{CFK}(\mathcal{D})$ is the number of its components in the region $S_{1/2}$ minus $(g+n-1)$. In particular, a generator has maximal Alexander grading ($g+n-1$) if and only if all of its components lie in the region $S_{1/2}$.
\end{lemma}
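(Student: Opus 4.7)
The plan is to apply Lemma \ref{lemma:relperiodic} to a relative periodic domain built from a page of $(B,\pi)$, and then to fix the absolute additive constant using the $A\mapsto -A$ conjugation symmetry of $\widehat{HFK}$.

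For the relative grading, let $\mathcal{P}\subset\Sigma$ denote the $2$-chain obtained by taking the region $S_{1/2}\subset\Sigma$ with multiplicity $1$, and for each generator $\bold{x}$ let $k(\bold{x})$ denote the number of components of $\bold{x}$ lying in $S_{1/2}$. By inspection of Figures \ref{fig:one} and \ref{fig:three}, each $\alpha$- and each $\beta$-curve is cut by $\partial S_{1/2}$ into two halves, one in $S_{1/2}$ and one in $-S_0$; the halves sitting in $\partial S_{1/2}$ assemble into complete $\alpha$- and $\beta$-curves appearing in $\partial\mathcal{P}$, while the remaining contribution to $\partial\mathcal{P}$ runs along $\partial S_{1/2}$ in the complement of these curves, giving exactly the oriented longitude $\lambda = \sum_i \lambda_i$ depicted in Figure \ref{fig:three}. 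Writing $\mathcal{P} = \sum_i \mathcal{P}_i$, where $\mathcal{P}_i$ is the slice of $S_{1/2}$ adjacent to the binding component $B_i$, each $\mathcal{P}_i$ is a relative periodic domain whose homology class represents a (rational) Seifert surface for $B_i$ --- namely a piece of the page. Since $\mathcal{P}$ has multiplicity $1$ on $S_{1/2}$ and $0$ on $-S_0$, we have $n_\bold{x}(\mathcal{P}) = k(\bold{x})$, and applying Lemma \ref{lemma:relperiodic} componentwise and summing yields
\[
A(\bold{x}) - A(\bold{y}) \;=\; k(\bold{x}) - k(\bold{y}),
\]
so that $A(\bold{x}) = k(\bold{x}) + C$ for some constant $C$ depending only on $\mathcal{D}$.

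To pin down $C$, I will invoke the fact that the set of Alexander gradings supporting $\widehat{HFK}(-Y,B)$ is symmetric about $0$. Each generator has exactly $2g+2n-2$ components, so $k(\bold{x}) \in \{0,1,\dots,2g+2n-2\}$; the BRAID generator $\bold{x}_\mathcal{D}$ realizes the maximal value $k=2g+2n-2$, and the analogous generator with all components placed in $-S_0$ realizes the minimal value $k=0$. Hence the maximum Alexander grading attained is $(2g+2n-2)+C$ and the minimum is $C$, and the symmetry forces $(2g+2n-2+C)+C=0$, so $C=-(g+n-1)$. Combining with the relative formula gives $A(\bold{x}) = k(\bold{x}) - (g+n-1)$, and equality to the maximum $g+n-1$ holds precisely when $k(\bold{x}) = 2g+2n-2$, i.e.\ when every component of $\bold{x}$ lies in $S_{1/2}$.

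The main obstacle is the boundary decomposition in the first step: verifying that $\partial\mathcal{P}$ really breaks up as the longitude $\lambda$ plus full $\alpha$- and $\beta$-curves. This is a careful combinatorial bookkeeping that hinges on the precise placement of the basepoints $\bold{z}$ and $\bold{w}$ and the construction of $\lambda$ pictured in Figure \ref{fig:three}. Secondarily, one must confirm that a generator with $k=0$ is actually present in $\mathcal{D}$; in the event it is not, the constant $C$ can instead be pinned down by a direct self-linking or Chern-class calculation at the BRAID generator $\bold{x}_\mathcal{D}$.
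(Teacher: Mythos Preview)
Your approach matches the paper's: take the page $S_{1/2}$ as the relative periodic domain, apply Lemma~\ref{lemma:relperiodic}, and invoke the conjugation symmetry of $\widehat{HFK}$ to pin down the additive constant. Two details in your write-up need fixing, though neither is fatal.

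First, your boundary computation is off. Inside $S_{1/2}$ the multiplicity of $\mathcal{P}$ on \emph{both} sides of every $\alpha$- or $\beta$-arc is $1$, and inside $-S_0$ it is $0$ on both sides, so the $\alpha$- and $\beta$-curves contribute nothing whatsoever to $\partial\mathcal{P}$. One has simply $\partial\mathcal{P}=\lambda$, the longitude drawn along $\partial S$ in Figure~\ref{fig:three}---cleaner than what you wrote, and exactly what the paper asserts.

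Second, the decomposition $\mathcal{P}=\sum_i\mathcal{P}_i$ into ``slices of $S_{1/2}$ adjacent to $B_i$'' does not exist: the page $S_{1/2}$ is connected, so it cannot be cut into such pieces. If you really wanted componentwise relative periodic domains $\mathcal{P}_i$ with $\partial\mathcal{P}_i=\lambda_i+(\text{full curves})$, you would have to build them separately, and they would not be subregions of $S_{1/2}$. The paper avoids this by treating $S_{1/2}$ as a single relative periodic domain for the whole link $B$, with $\lambda=\sum_i\lambda_i$; Lemma~\ref{lemma:relperiodic} extends verbatim to that setting and yields the total Alexander grading directly.

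Your caveat about the existence of a generator with $k=0$ is fair; the paper is equally terse here (``symmetry determines the absolute grading''). One can check directly that such a generator exists in $-S_0$, or alternatively compute the absolute grading of $\bold{x}_{\mathcal{D}}$ from the Chern-class formula.
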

\begin{proof}
Consider the relative periodic domain $S_{1/2}$ having boundary consisting only of a longitude for $B$ (see right side of Figure $\ref{fig:two}$). Using Lemma \ref{lemma:relperiodic} it follows that a generator has all components in $S_{1/2}$ if and only if it has maximal Alexander grading. Symmetry determines the absolute grading.
\end{proof}

We first prove Theorem \ref{thm:binding} in the case that $K=\emptyset$ and the underlying monodromy of the open book is trivial, this is the content of the following technical proposition.
If $\phi$ is the identity monodromy, then $Y\simeq \# ^{2g+n-1} S^1 \times S^2$. The homology classes of oriented curves $\{A_i\}_1^{g+n-1}\cup\{B_i\}_1^{g} \subset S$ pictured in Figure $\ref{fig:four}$, freely generate $H_1(Y; \mathbb{Z})$; in particular the curves $\{A_i\}^{g+n-1}_{g+1}$ are boundary parallel.

\begin{figure}[h]
\def\svgwidth{150pt}
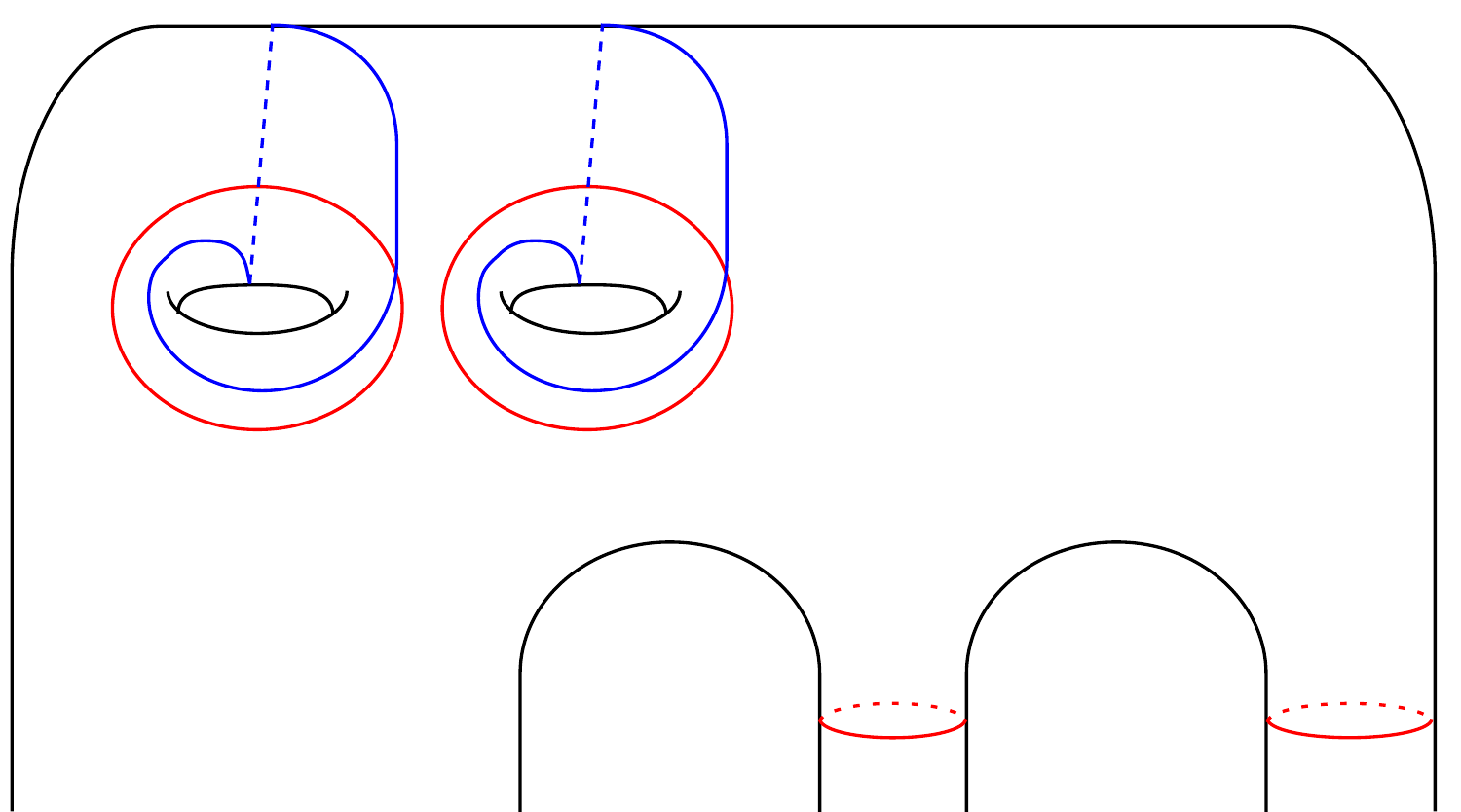
\caption{A basis for $H_1 (Y;\mathbb{Z})$, when $\phi = \bold{1}_{(S,\partial S)}$}
\label{fig:four}
\end{figure}


\begin{proposition}
\label{prop:identity}
If $\phi$ is the identity monodromy, then $\bold{x}_{\mathcal{D}}$ is the unique generator of maximal Alexander grading having $Spin^C$-grading $s_{\bold{w}_B} (\bold{x}_\mathcal{D})= s (\xi)$. In particular, $[\bold{x}_\mathcal{D}] = \widehat{t}(B) \ne 0$.
\end{proposition}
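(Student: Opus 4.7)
The plan is to show $\bold{x}_{\mathcal{D}}$ is the unique generator of $\widehat{CFK}(\mathcal{D})$ of maximal Alexander grading, confirm it lies in the $\text{Spin}^c$ summand $s(\xi)$, and then deduce nonvanishing via a standard Alexander-grading argument. The identity monodromy hypothesis is used only to cleanly identify $Y$ with $\#^{2g+n-1}(S^1\times S^2)$ and its contact $\text{Spin}^c$ structure; the key uniqueness argument is purely combinatorial.

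First I would invoke Lemma~\ref{lemma:alex}: a generator has maximal Alexander grading $g+n-1$ if and only if all $2g+2n-2$ of its components lie in the region $S_{1/2}$. In this region, the curves $\alpha_i$ and $\beta_j$ restrict to the arcs $a_i$ and $b_j$ of the basis from Subsection~\ref{subsec:HFK}. By construction, $\{b_j\}$ is obtained from $\{a_j\}$ by a small isotopy supported in a narrow strip around each $a_j$, moving endpoints slightly in the direction of $\partial S$; this strip is disjoint from the remaining basis arcs. Consequently $a_i \cap b_j \cap S_{1/2}$ is the single positive intersection $x_i$ when $i=j$ and is empty when $i\ne j$. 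It follows that the only element of $\mathbb{T}_{\boldsymbol\alpha}\cap \mathbb{T}_{\boldsymbol\beta}$ supported entirely in $S_{1/2}$ is $\bold{x}_{\mathcal{D}}=(x_1,\dots,x_{2g+2n-2})$. The identification $s_{\bold{w}_B}(\bold{x}_{\mathcal{D}})=s(\xi)$ is a general feature of the BRAID invariant construction, recorded in Theorem~3.1 of~\cite{equiv}.

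With uniqueness in hand, I would conclude $[\bold{x}_{\mathcal{D}}] \ne 0$ by the standard grading argument. The differential on $\widehat{CFK}(\mathcal{D})$ counts holomorphic disks with $n_{\bold{z}}=n_{\bold{w}}=0$ and therefore preserves the Alexander grading while lowering the Maslov grading by one. Hence $\partial \bold{x}_{\mathcal{D}}$ is a linear combination of generators of maximal Alexander grading; since $\bold{x}_{\mathcal{D}}$ is the only such generator, and the Maslov grading prevents $\partial\bold{x}_{\mathcal{D}}=\bold{x}_{\mathcal{D}}$, we get $\partial\bold{x}_{\mathcal{D}}=0$. Dually, if some $\bold{y}$ had $\bold{x}_{\mathcal{D}}$ in the support of $\partial\bold{y}$, then $\bold{y}$ would itself lie in maximal Alexander grading, forcing $\bold{y}=\bold{x}_{\mathcal{D}}$ and contradicting the Maslov shift. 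Thus $\bold{x}_{\mathcal{D}}$ is a cycle but not a boundary, giving $[\bold{x}_{\mathcal{D}}]=\widehat{t}(B)\ne 0$.

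The main obstacle is the uniqueness claim in $S_{1/2}$, specifically ensuring $a_i\cap b_j=\emptyset$ for $i\ne j$ even in the presence of the extra arcs $a_{2,i}$ introduced by the punctures (Figure~\ref{fig:one}). This should be verified by inspection of the basis: as long as the isotopy producing each $b_j$ is sufficiently small and kept within a strip around $a_j$ disjoint from all other basis arcs, the desired disjointness holds, and the rest of the proof is a formal grading argument.
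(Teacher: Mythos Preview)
Your central uniqueness claim --- that $\bold{x}_{\mathcal{D}}$ is the \emph{only} generator with all components in $S_{1/2}$ --- is false for the diagram $\mathcal{D}$ to which the proposition refers. By the time Lemma~\ref{lemma:alex} and Proposition~\ref{prop:identity} are stated, $\mathcal{D}$ denotes the isotoped diagram of Figure~\ref{fig:three}, not the naive small-pushoff diagram coming straight from Subsection~\ref{subsec:HFK}. In that isotoped diagram there are many generators supported entirely in $S_{1/2}$; the paper's proof exhibits them explicitly (for instance, $\beta_{2g+n-1}$ meets three distinct $\alpha$-arcs inside $S_{1/2}$, and the genus curves $\beta_{2j-1},\beta_{2j}$ each meet both $\alpha_{2j-1}$ and $\alpha_{2j}$ there). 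Accordingly the paper does not claim absolute uniqueness but only uniqueness \emph{within the $\mathrm{Spin}^c$ class $s(\xi)$}, which it proves by a recursive $\mathrm{Spin}^c$ computation: for any top-grading generator $\bold{y}$ one builds a chain $\bold{y}=\bold{y}_{g+n},\dots,\bold{y}_1=\bold{x}_{\mathcal{D}}$ and shows each step changes the $\mathrm{Spin}^c$ structure by a nonzero combination of the basis classes $PD[A_j],\,PD[B_j]\in H^2(\#^{2g+n-1}S^1\times S^2)$ unless $\bold{y}_{j+1}=\bold{y}_j$.

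Your intuition is correct for the un-isotoped small-pushoff diagram, where indeed $a_i\cap b_j=\emptyset$ in $S_{1/2}$ for $i\ne j$. To make your argument rigorous there you would have to (i) re-establish the Alexander-grading characterization of Lemma~\ref{lemma:alex} for that diagram, which is straightforward, and (ii) verify weak admissibility, which is not automatic since $Y\cong\#^{2g+n-1}(S^1\times S^2)$ carries a $(2g+n-1)$-dimensional space of periodic domains and the thin bigon strips between $a_i$ and $b_i$ in $S_{1/2}$ contain no basepoints. A further warning sign: your argument, as you note, never uses the hypothesis $\phi=\bold{1}$; if it went through as written it would prove Theorem~\ref{thm:nobraid} for arbitrary monodromy directly, making the paper's surgery exact triangle reduction unnecessary.
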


\begin{proof}

Suppose $\bold{y}\in \widehat{CFK} (\mathcal{D}, top)$ is a generator, and let $(\bold{y})^{i}$, $(\bold{y})_{i}$, denote the component of $\bold{y}$ on $\beta_{i}$, $\alpha_{i}$ respectively. 

Lemma 2.19 of \cite{holdisks3} allows us to compute the difference of $Spin^C$-gradings of generators via the Heegaard diagram.
We argue that the difference $s_{\bold{w}_B} (\bold{y})- s_{\bold{w}_B} (\bold{x}_\mathcal{D})$, a linear combination of Poincar\'{e} duals of basis elements for $H_1(Y;\mathbb{Z})$, vanishes if and only if $\bold{y}=\bold{x}_\mathcal{H}$.

We will recursively construct a sequence of generators $\{\bold{y}_j\}$ 
\[
\bold{y}=\bold{y}_{g+n},\bold{y}_{g+n-1},\dots,\bold{y}_1=\bold{x}_\mathcal{D}
\]
in $\widehat{CFK}(\mathcal{D},top)$ such that
\begin{itemize}
\item $(\bold{y}_j)^{i}=(\bold{x}_\mathcal{D})^{i}$ for $j>g$ and $i\ge g+j$,
\item $(\bold{y}_j)^{i}=(\bold{x}_\mathcal{D})^{i}$ and $(\bold{y}_j)^{2,k}=(\bold{x}_\mathcal{D})^{2,k}$ for $j\le g$, $i\ge 2j-1$, and any $k$.
\item $s_{\bold{w}_B} (\bold{y}_{j+1}) -s_{\bold{w}_B} (\bold{y}_{j})$ is a linear combination of $PD([A_j])$ and $PD([B_j])$ (the latter only appears for $j\le g$). The difference is equal to zero if and only if $\bold{y}_{j+1} = \bold{y}_{j}$.
\end{itemize}

\begin{enumerate}[leftmargin=*]

\item
We begin by constructing $\bold{y}_{g+n-1}$. If $(\bold{y})^{2g+n-1} = (\bold{x}_\mathcal{D})^{2g+n-1}$ then we set $\bold{y}_{g+n-1} = \bold{y}$, so assume $(\bold{y})^{2g+n-1} \ne (\bold{x}_\mathcal{D})^{2g+n-1}$. There are two other intersection points involving $\beta_{2g+n-1}$ in the region $S_{1/2}$, one with $\alpha_{2g+n-1}$ and another with $\alpha_{2,2g+n-1}$.\\

\begin{enumerate}
\item
If $(\bold{y})^{2g+n-1}=(\bold{y})_{2g+n-1}$ let $\bold{y}_{g+n-1}$ be obtained from $\bold{y}$ by replacing $(\bold{y})^{2g+n-1}$ with $(\bold{x}_\mathcal{D})^{2g+n-1}$. Then $s_{\bold{w}_B} (\bold{y}) -s_{\bold{w}_B} (\bold{y}_{g+n-1}) = PD([A_{g+n-1}])$. \\

\item
If $(\bold{y})^{2g+n-1}=(\bold{y})_{2,2g+n-1}$ then it must be the case that $(\bold{y})^{2,2g+n-1}=(\bold{y})_{2g+n-1}$. Let $\bold{y}_{g+n-1}$ be obtained from $\bold{y}$ by replacing $(\bold{y})^{2g+n-1}$, $(\bold{y})^{2,2g+n-1}$, with $(\bold{x}_\mathcal{D})^{2g+n-1}$, $(\bold{x}_\mathcal{D})^{2,2g+n-1}$, respectively. Then $s_{\bold{w}_B} (\bold{y}) -s_{\bold{w}_B} (\bold{y}_{g+n-1}) = PD([A_{g+n-1}])$.\\
\end{enumerate}

\begin{figure}[h]
\def\svgwidth{180pt}
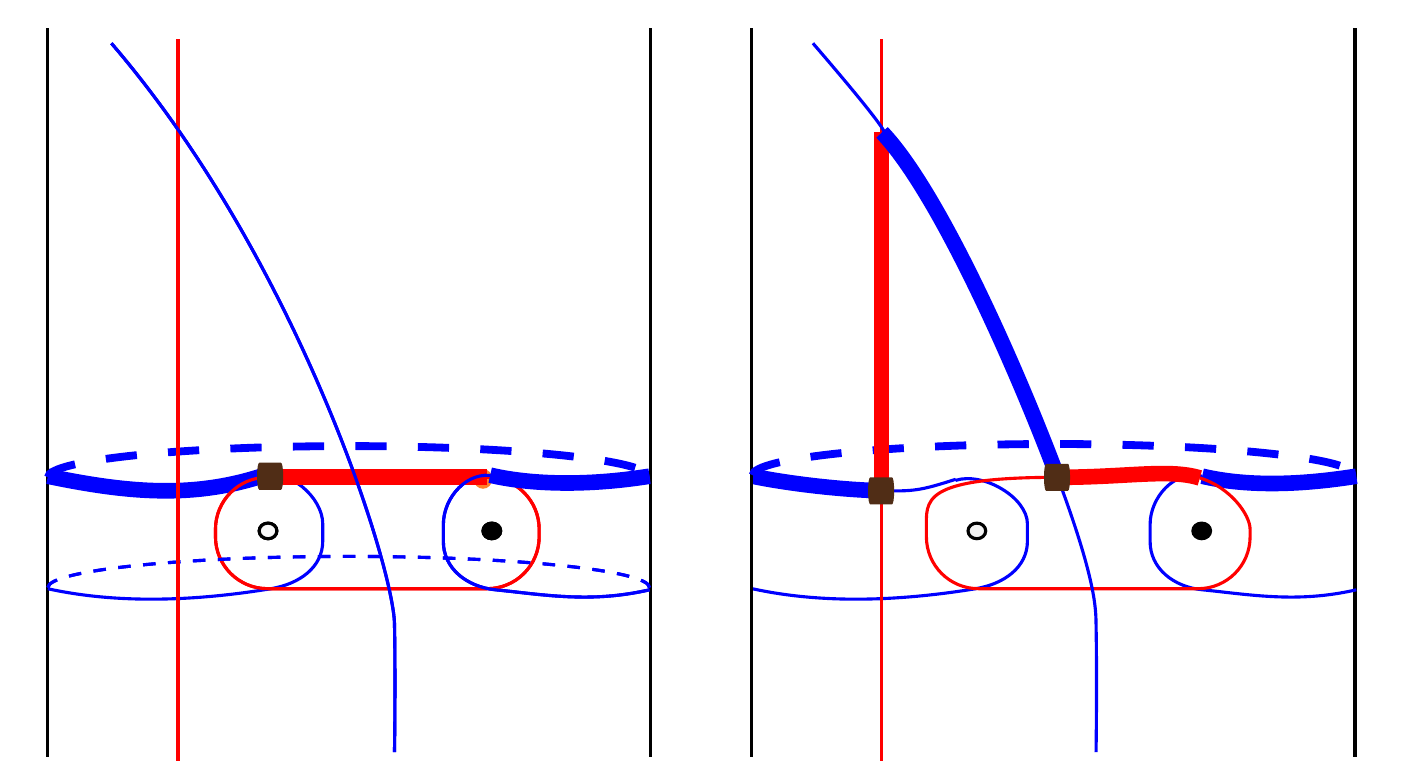
\caption{\textbf{Step 1}: The construction of $\bold{y}_{g+n-1}$, both cases. The orange dots are components of $\bold{x}_\mathcal{H}$ and $\bold{y}_{g+n-1}$ (they agree here by construction). The brown squares represent $\bold{y}$. The highlighted path $\epsilon (\bold{y},\bold{y}_{g+n-1})$ is homologous to $[A_{g+n-1}]$ in both cases.}
\label{fig:five}
\end{figure}

\item
Assuming that we have constructed $\bold{y}_{j+1}$ for some $j>g$ satisfying the hypotheses, we construct $\bold{y}_j$. 
If $(\bold{y}_{j+1})^{g+j} = (\bold{x}_\mathcal{D})^{g+j}$ then we set $\bold{y}_{j} = \bold{y}_{j+1}$, so assume $(\bold{y}_{j+1})^{g+j} \ne (\bold{x}_\mathcal{D})^{g+j}$. There are three other intersection points involving $\beta_{g+j}$ in the region $S_{1/2}$, one with $\alpha_{g+j}$, another with $\alpha_{2,g+j}$ and a third with $\alpha_{2,g+j+1}$. \\

\begin{enumerate}
\item
If $(\bold{y}_{j+1})^{g+j} = (\bold{y}_{j+1})_{g+j}$, let $\bold{y}_j$ be obtained from $\bold{y}_{j+1}$ by replacing $(\bold{y}_{j+1})^{g+j}$ with $(\bold{x}_\mathcal{D})^{g+j}$. As in the first case of the construction of $\bold{y}_{g+n-1}$ it follows that $s_{\bold{w}_B} (\bold{y}_{j+1}) -s_{\bold{w}_B} (\bold{y}_{j}) = PD([A_{j}])$.\\

\item
If $(\bold{y}_{j+1})^{g+j} = (\bold{y}_{j+1})_{2,g+j}$ there are two further sub-cases. Either $(\bold{y}_{j+1})_{g+j} = (\bold{y}_{j+1})^{2,g+j+1}$ and $(\bold{y}_{j+1})^{2,g+j} = (\bold{y}_{j+1})_{2,g+j+1}$,
 or $(\bold{y}_{j+1})_{g+j} = (\bold{y}_{j+1})^{2,g+j}$. \\

\begin{enumerate}
\item
 In the former case, let $\bold{y}_j$ be obtained from $\bold{y}_{j+1}$ by replacing $(\bold{y}_{j+1})^{g+j}$ with $(\bold{x}_{\mathcal{D}})^{g+j}$, $(\bold{y}_{j+1})^{2,g+j}$ with $(\bold{x}_{\mathcal{D}})^{2,g+j}$, and $(\bold{y}_{j+1})^{2,g+j+1}$ with $(\bold{x}_{\mathcal{D}})^{2,g+j+1}$.\\

\item
In the latter case, let $\bold{y}_j$ be obtained from $\bold{y}_{j+1}$ by replacing $(\bold{y}_{j+1})^{g+j}$ with $(\bold{x}_{\mathcal{D}})^{g+j}$ and $(\bold{y}_{j+1})^{2,g+j}$ with $(\bold{x}_{\mathcal{D}})^{2,g+j}$. In either case we have that $s_{\bold{w}_B} (\bold{y}_{j+1}) -s_{\bold{w}_B} (\bold{y}_{j}) = PD([A_{j}])$. See Figure \ref{fig:six}.\\
\end{enumerate}
 
 \begin{figure}[h]
\def\svgwidth{195pt}
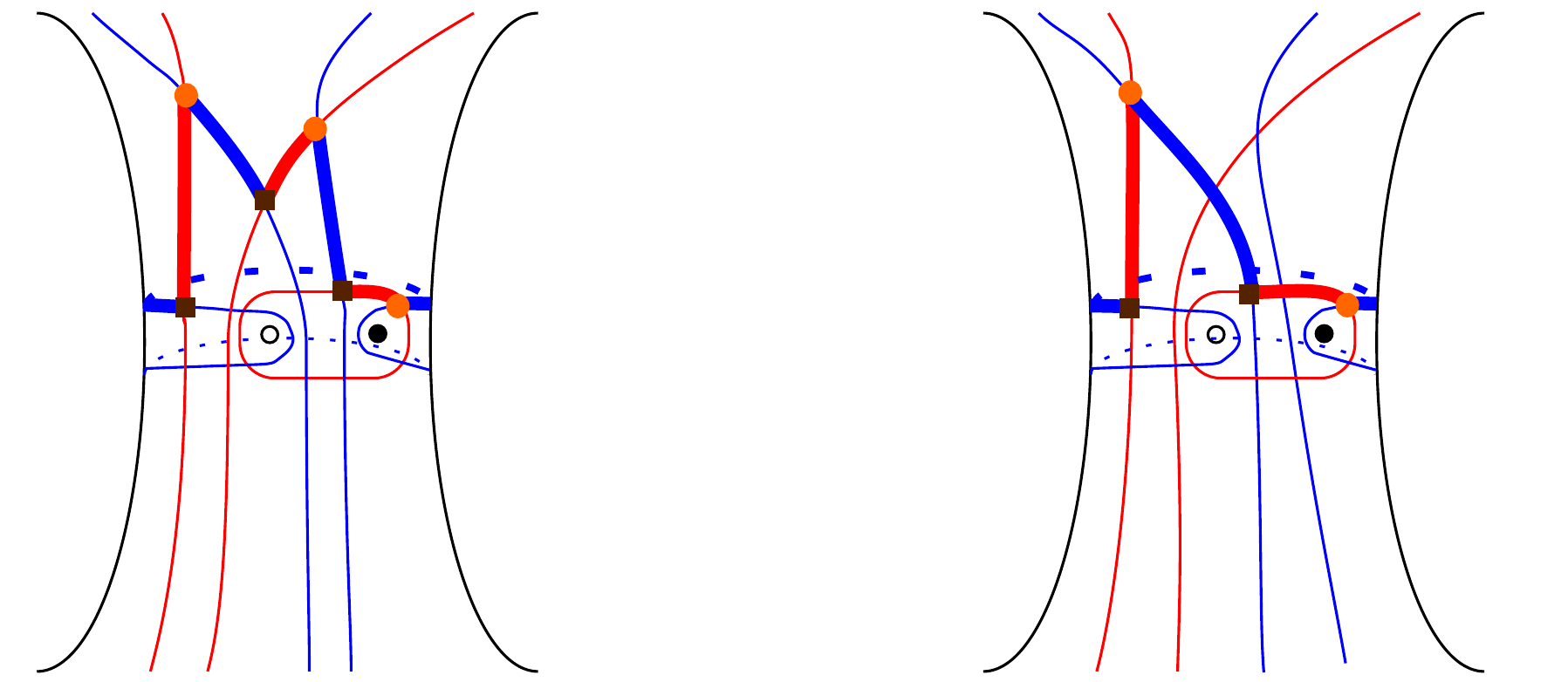
\caption{\textbf{Case 2b}: The construction of $\bold{y}_{j}$, in both sub-cases. The orange dots are components of $\bold{x}_\mathcal{H}$ and $\bold{y}_{j}$ (they agree here by construction). The brown squares are components of $\bold{y}_{j+1}$. The highlighted path $\epsilon (\bold{y}_{j+1},\bold{y}_{j})$ is homologous to $[A_{j}]$ in both cases.}
\label{fig:six}
\end{figure}
 
\item
If $(\bold{y}_{j+1})^{g+j} = (\bold{y}_{j+1})_{2,g+j+1}$ then it follows that $(\bold{y}_{j+1})_{g+j} = (\bold{y}_{j+1})^{2,g+j+1}$. Let $\bold{y}_{j}$ be obtained from $\bold{y}_{j+1}$ by replacing $(\bold{y}_{j+1})^{g+j}$, $(\bold{y}_{j+1})^{2,g+j+1}$, with $(\bold{x}_\mathcal{D})^{g+j}$, $(\bold{x}_\mathcal{D})^{2,g+j+1}$, respectively. The picture is similar to that of the second case (1b) of the construction of $\bold{y}_{g+n-1}$.
It follows that $s_{\bold{w}_B} (\bold{y}_{j+1}) -s_{\bold{w}_B} (\bold{y}_{j}) = PD([A_{j}])$.
\end{enumerate}

\vspace{.5cm}
\newpage
\item

Assuming that we have constructed $\bold{y}_{j+1}$ for some $j\le g$ satisfying the hypotheses, we construct $\bold{y}_j$.  For each $i > 2g$, in the domain $S_{1/2}$ the curve $\beta_{2,i}$ intersects only $\alpha_{2,i}$ (among the $\alpha$-curves whose components of $\bold{y}_{j+1}$ are not fixed); i.e. $(\bold{y}_{j+1})^{2,i} = (\bold{y}_{j+1})_{2,i}$. Of the remaining $\alpha$-curves, $\beta _{2j}$ and $\beta_{2j-1}$ intersect only $\alpha_{2j}$ and $\alpha_{2j-1}$. There are two cases, with three nontrivial sub-cases each. Either $(\bold{y}_{j+1})^{2j}=(\bold{y}_{j+1})_{2j}$ and $(\bold{y}_{j+1})^{2j-1}=(\bold{y}_{j+1})_{2j-1}$, or $(\bold{y}_{j+1})^{2j}=(\bold{y}_{j+1})_{2j-1}$ and $(\bold{y}_{j+1})^{2j-1}=(\bold{y}_{j+1})_{2j}$.
The trivial sub-case is that $(\bold{y}_{j+1})^{2j}=(\bold{x}_\mathcal{D})^{2j}$ and  $(\bold{y}_{j+1})^{2j-1}=(\bold{x}_\mathcal{D})^{2j-1}$, where we set $\bold{y}_j = \bold{y}_{j+1}$. \\
\begin{enumerate}
\item
Suppose that $(\bold{y}_{j+1})^{2j}=(\bold{y}_{j+1})_{2j}$ and $(\bold{y}_{j+1})^{2j-1}=(\bold{y}_{j+1})_{2j-1}$.\\
\begin{enumerate}
\item

If $(\bold{y}_{j+1})^{2j}\ne (\bold{x}_\mathcal{D})^{2j}$ and $(\bold{y}_{j+1})^{2j-1}=(\bold{x}_\mathcal{D})^{2j-1}$ then, obtaining $\bold{y}_j$ from $\bold{y}_{j+1}$ by replacing $(\bold{y}_{j+1})^{2j}$ with  $(\bold{x}_\mathcal{D})^{2j}$, we have that $s_{\bold{w}_B} (\bold{y}_{j+1}) -s_{\bold{w}_B} (\bold{y}_{j}) =PD([B_j])$. See the left side of Figure \ref{fig:seven}.\\
\item
If $(\bold{y}_{j+1})^{2j}= (\bold{x}_\mathcal{D})^{2j}$ and $(\bold{y}_{j+1})^{2j-1}\ne(\bold{x}_\mathcal{D})^{2j-1}$ then, obtaining $\bold{y}_j$ from $\bold{y}_{j+1}$ by replacing $(\bold{y}_{j+1})^{2j-1}$ with  $(\bold{x}_\mathcal{D})^{2j-1}$, we have that $s_{\bold{w}_B} (\bold{y}_{j+1}) -s_{\bold{w}_B} (\bold{y}_{j}) =PD([A_j])$. See the right side of Figure \ref{fig:seven}\\
\item
If $(\bold{y}_{j+1})^{2j}\ne (\bold{x}_\mathcal{D})^{2j}$ and $(\bold{y}_{j+1})^{2j-1}\ne(\bold{x}_\mathcal{D})^{2j-1}$ then, we obtain $\bold{y}_j$ from $\bold{y}_{j+1}$ by replacing $(\bold{y}_{j+1})^{2j}$ with  $(\bold{x}_\mathcal{D})^{2j}$ and $(\bold{y}_{j+1})^{2j-1}$ with  $(\bold{x}_\mathcal{D})^{2j-1}$. Taking the union of the paths of the two previous sub-cases, we have that  $s_{ \bold{w}_B} (\bold{y}_{j+1}) -s_{ \bold{w}_B} (\bold{y}_{j}) =PD([A_j]+[B_j])$. 
\end{enumerate}


 \begin{figure}[h]
\def\svgwidth{185pt}
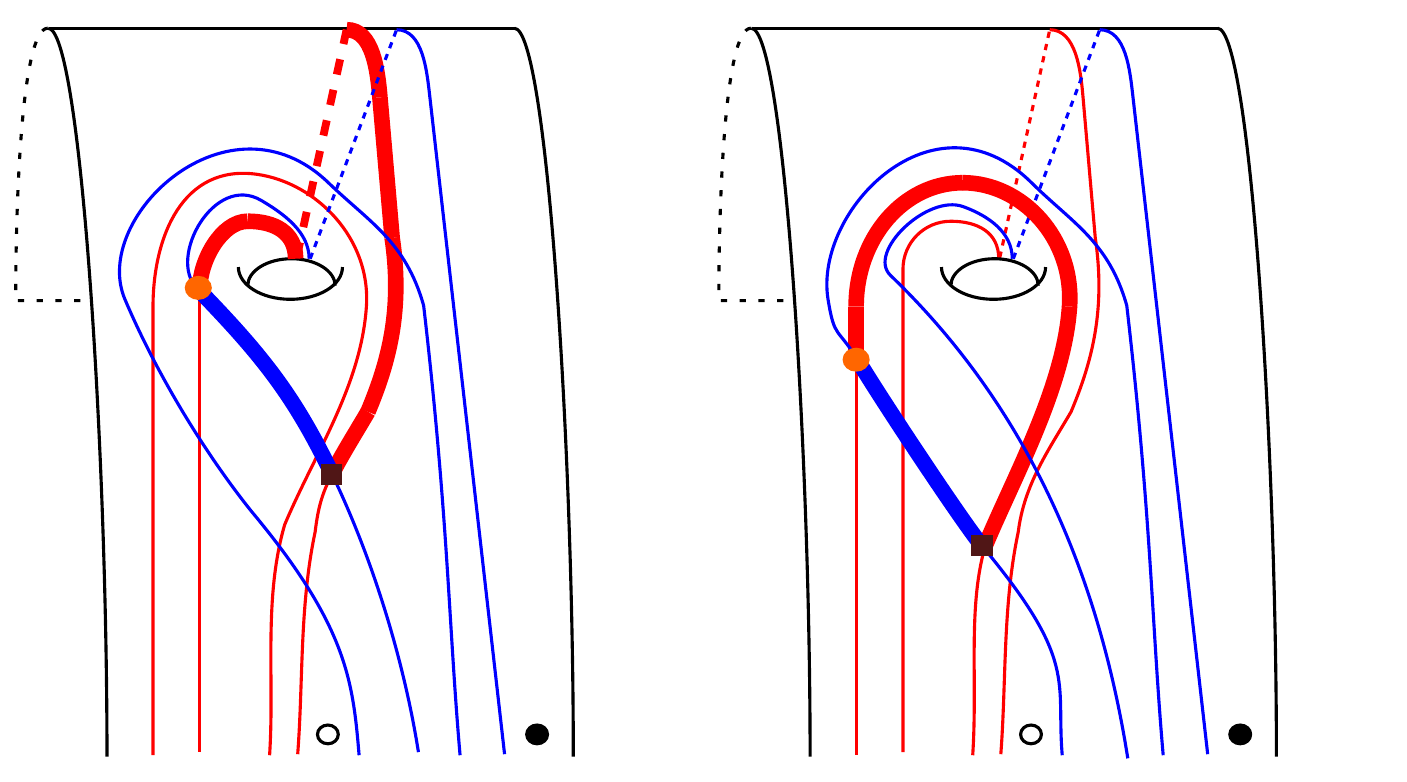
\caption{\textbf{Case 3a}: The construction of $\bold{y}_{j}$, in the first and second sub-cases. The orange dots are components of $\bold{x}_\mathcal{H}$ and $\bold{y}_{j}$. The brown squares are components of $\bold{y}_{j+1}$. The highlighted path $\epsilon (\bold{y}_{j+1},\bold{y}_{j})$ is homologous to $[B_j]$ in the first sub-case, and $[A_{j}]$ in the second.}
\label{fig:seven}
\end{figure}

\item
Now, assume we are in the case that $(\bold{y}_{j+1})^{2j}=(\bold{y}_{j+1})_{2j-1}$ and $(\bold{y}_{j+1})^{2j-1}=(\bold{y}_{j+1})_{2j}$. The intersection point $(\bold{y}_{j+1})^{2j}$ is fixed, and there are three possibilities for $(\bold{y}_{j+1})^{2j-1}$. In each of these sub-cases we obtain $\bold{y}_j$ from $\bold{y}_{j+1}$ by replacing $(\bold{y}_{j+1})^{2j}$ with  $(\bold{x}_\mathcal{D})^{2j}$ and $(\bold{y}_{j+1})^{2j-1}$ with  $(\bold{x}_\mathcal{D})^{2j-1}$. Figure \ref{fig:eight} shows the three possible sub-cases; in each case $s_{ \bold{w}_B } (\bold{y}_{j+1}) -s_{\bold{w}_B} (\bold{y}_{j})$ is a non-trivial linear combination of $PD([A_j])$ and $PD([B_j])$. 
\end{enumerate}
\end{enumerate}

 \begin{figure}[h]
\def\svgwidth{195pt}
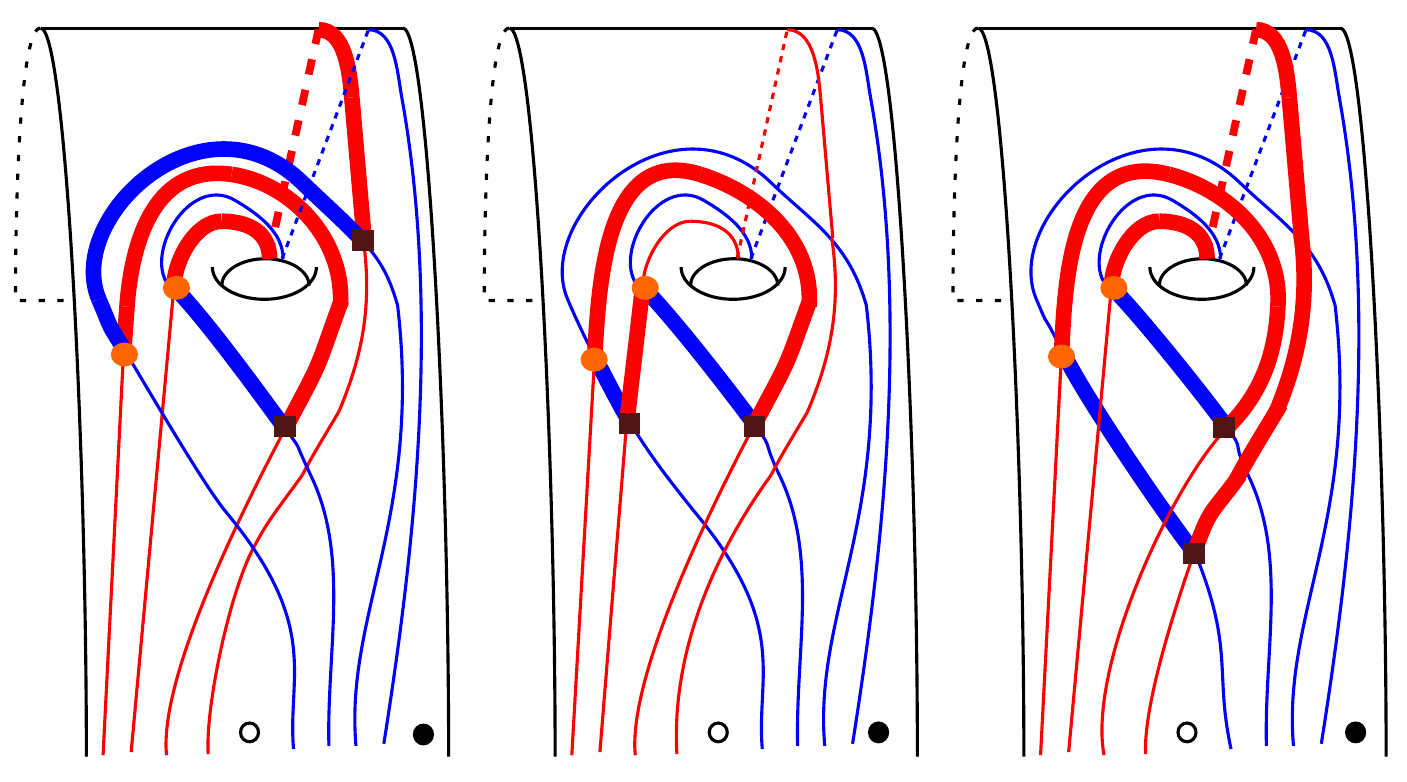
\caption{\textbf{Case 3b}: The construction of $\bold{y}_{j}$, in the case that $(\bold{y}_{j+1})^{2j}=(\bold{y}_{j+1})_{2j-1}$. The orange dots are components of $\bold{x}_\mathcal{H}$ and $\bold{y}_{j}$. The brown squares are components of $\bold{y}_{j+1}$. The highlighted path $\epsilon (\bold{y}_{j+1},\bold{y}_{j})$ is homologous to $[B_j]$ in the first sub-case, $[A_{j}]$ in the second, and $[A_j]+[B_j]$ in the third.}
\label{fig:eight}
\end{figure}

\end{proof}


Still assuming $K=\emptyset$, we now consider a monodromy $\phi$ specifying a 3-manifold whose first homology is compatible with the calculation of Proposition \ref{prop:identity}.

\begin{lemma}
\label{lemma:negative}
Suppose an open book $(B,\pi)$ has monodromy
\[\phi = P\circ \displaystyle\prod_{C \in \Delta} \tau_{C}^{m},\] where $P$ is a product of right and left-handed Dehn twists about separating curves in $S$, $\Delta$ is the set of $n$ connected components of $\partial S$, and $m\in \mathbb{Z}$. Then $\bold{x}_{\mathcal{D}}$ is the unique generator of maximal Alexander grading having $Spin^C$-grading $s_{\bold{w}_B} (\bold{x}_\mathcal{D})= s (\xi)$. In particular, $[\bold{x}_\mathcal{D}] = \widehat{t}(B) \ne 0$.

\end{lemma}
\begin{proof}

Given the monodromy $\phi$, one readily obtains a presentation for $H_1 (Y)$ as in Subsection 2.1 of \cite{monhom}. 
We use the generators $\{A_i\}_1^{g+n-1}\cup \{B_i\}_1^g$ described above Figure \ref{fig:four}.
Dehn twists about separating curves do not introduce any relations.
The composition of boundary parallel Dehn twists $\displaystyle\prod_{C \in D} \tau_{C}^{m}$ introduces the $n-1$ relations \[\{nm(A_i) =0 | i>g\}.\]
If $n=1$, there are no new relations and the $Spin^C$-structure calculation of Proposition \ref{prop:identity} carries through. If $n>1$,
consider the group homomorphism $h: H_1(Y)\to \mathbb{Z}^{2g} \oplus \mathbb{Z}/n\mathbb{Z}$ specified by

\begin{itemize}
\item $h(A_i) = e_i$ for $i\le g$
\item $h(B_i) = e_{g+i}$ 
\item $h(A_i) = 1\in\mathbb{Z}/n\mathbb{Z}$ for $i>g$.
\end{itemize}
where $\{e_1,\dots,e_{2g}\}$ is a basis for $\mathbb{Z}^{2g}$ and $1\in\mathbb{Z}/n\mathbb{Z}$ is a generator. Note that for any subset \[R\subset \{g+1,g+2,\dots, g+n-1\} \text{ we have } \sum\limits_{i\in R} A_i \not\in ker(h).\]  
In particular $\sum\limits_{i\in R} A_i \in H_1(Y)$ is a nontrivial homology class. The $Spin^C$-structure calculation of Proposition \ref{prop:identity} carries through and shows that $\bold{x}_{\mathcal{D}}$ is the unique generator of maximal Alexander grading having $Spin^C$-grading $s_{\bold{w}_B} (\bold{x}_\mathcal{D})= s (\xi)$. 

\end{proof}

Suppose $K$ is a link braided about $(B,\pi)$ of braid index $k$. 
Consider a basis of arcs for $S\smallsetminus \{c_1,\dots, c_k, p_1,\dots,p_n\}$ obtained from $\{a_i\}_1 ^{2g+n-1} \cup \{a_{2,i}\}_{2g+1}^{2g+n-1}$ by adding $k$ boundary parallel arcs $\{a_{3,i}\}_{1}^{k}$ each of which (together with a sub-arc of $\partial S$) bound a disk containing one of the points $c_i$. 
This basis, together with a pointed monodromy having transverse closure $B\cup K$ (as in Remark \ref{remark:a}), specifies a Heegaard diagram $\mathcal{H} = (\Sigma, \boldsymbol{\beta},\boldsymbol{\alpha}, \bold{w}_K\cup \bold{w}_B,\bold{z}_K\cup \bold{z}_B)$, see Figure \ref{fig:nine}, which encodes $(-Y,B\cup K)$. By construction, $[\bold{x}_\mathcal{H}] \in \widehat{HFK}(\mathcal{H})$ is $\widehat{t}(B\cup K)$.

 \begin{figure}[h]
\def\svgwidth{170pt}
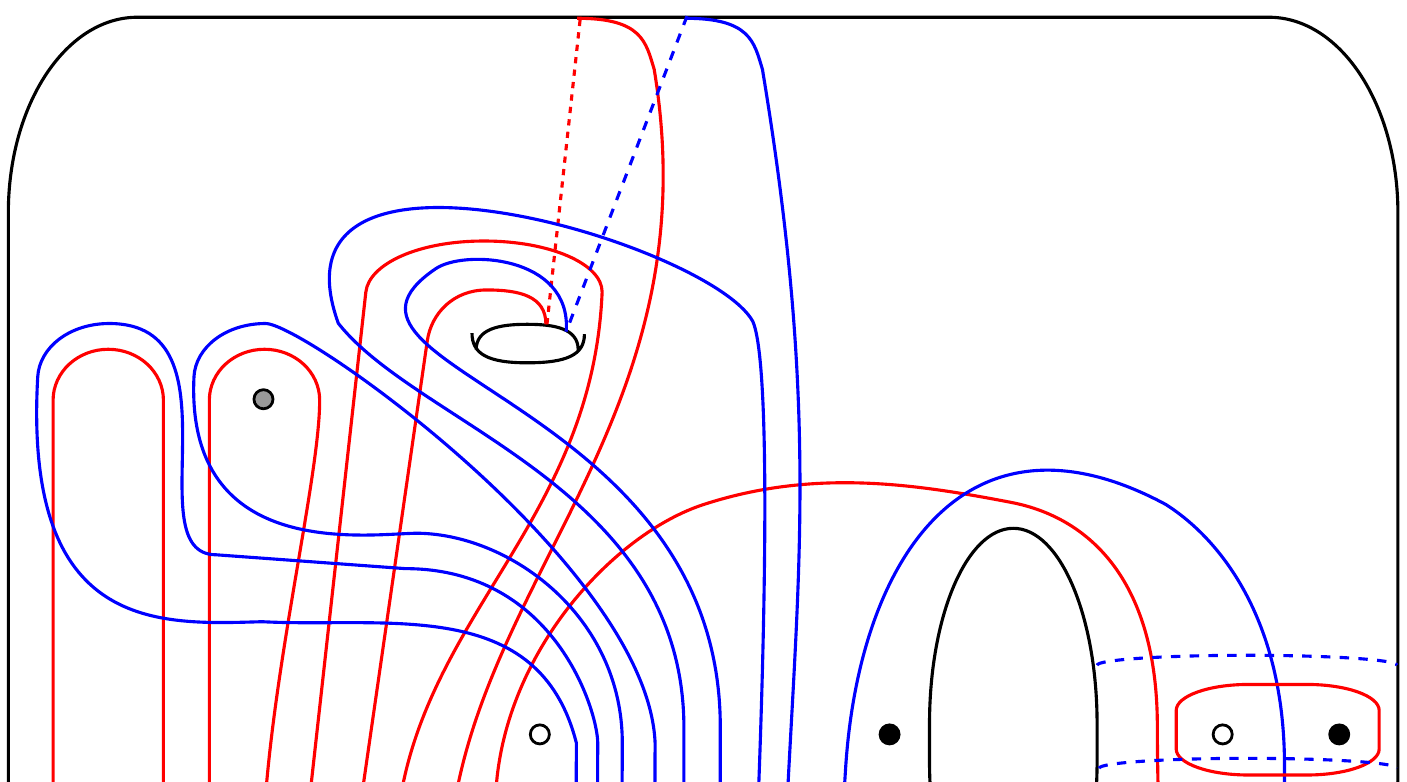
\caption{A portion of the diagram $\mathcal{H}$ in the case $k=2$, $g=1$, and $n=2$. The orange dots are components of $\bold{x}_\mathcal{H}$. Points of $\bold{w}_K$ are depicted with gray basepoints. The two left-most $\boldsymbol{\alpha}$-curves are $\alpha_{3,1}$ and $\alpha_{3,2}$.}
\label{fig:nine}
\end{figure}


\begin{lemma}
\label{lemma:braid}
Suppose $K$ is braided about an open book $(B,\pi)$ with open book monodromy
\[\phi = P\circ \displaystyle\prod_{C \in \Delta} \tau_{C}^{m},\] where $P$ is a product of right and left-handed Dehn twists about separating curves in $S$, $\Delta$ is the set of $n$ connected components of $\partial S$, and $m\in \mathbb{Z}$, then $\widehat{t}(B\cup K)\ne 0$.

\end{lemma}

\begin{proof}
The binding $B$ induces an Alexander grading on $\widehat{CFK}(\mathcal{H})$;
for a generator  $\bold{y}\in \mathbb{T}_{\boldsymbol{\beta}} \cap \mathbb{T}_{\boldsymbol{\alpha}}$ let $A_B (\bold{y})$ denote this grading.
As in Lemma \ref{lemma:alex} it is easy to argue that a generator $\bold{y}$ has maximal $A_B$ grading if and only if all components of $y$ lie in $S_{1/2}$.

Observe that in $S_{1/2}$ for any $i\le k$ the curve $\alpha_{3,i}$ only intersects curves $\beta_{3,j}$ for $j\le i$, and no other $\beta$ curves. Thus if $A_B (\bold{y})$ is maximal, then $(\bold{y})_{3,i }=(\bold{y})^{3,i }$ for $1\le i\le k$. Let $C$ denote the subcomplex of $\widehat{CFK}(\mathcal{H})$ in maximal $A_B$ grading. The underlying module of $C$ can be identified with $(V_1\otimes V_2 \otimes \dots \otimes V_k)\otimes \widehat{CFK}(\mathcal{D}, top)$, where each $V_i$ is a rank two $\mathbb{Z}_2$-module with generators $x_i$ and $y_i$, thought of as the points of $\boldsymbol{\beta}_{3,i}\cap \boldsymbol{\alpha}_{3,i}$ where $x_i = (\bold{x}_\mathcal{H})^{3,i}$.
By Lemma \ref{lemma:negative}, the summand $S$ of $C$ whose $Spin^C$ grading agrees with $s_{\bold{w}_B}(\bold{x}_{\mathcal{H}})$ is then identified, as a module, with $(V_1\otimes V_2 \otimes \dots \otimes V_k)\otimes \bold{x}_{\mathcal{D}}$. $S$ further decomposes as a direct sum over the relative $Spin^C$ structures on the complement of $K$; each generator of $S$ lies in a unique such grading so the differential on $S$ is trivial. In particular, $0\ne [x_1\otimes\dots\otimes x_k \otimes \bold{x}_\mathcal{D}] = [\bold{x}_\mathcal{H}] = \widehat{t}(B\cup K)$.

\end{proof}

We now consider general monodromy $\phi$, and prove Theorem \ref{thm:binding}.
\begin{proof}
Let $S$ denote a compact and oriented surface with boundary. Let $C\subset S$ denote a non-separating, non-boundary-parallel, simple closed curve, and $C'$ another such curve intersecting $C$ transversely in one point.
By applying a sequence of conjugations and compositions with left-handed Dehn twists about non-separating curves, one may pass from $\tau_C^{-1}$ to $(\tau_C \circ \tau_{C'})^{-6}$; by the chain relation (see Proposition 4.12 of \cite{primer}), the latter is equivalent to a left-handed Dehn twist about a separating curve, the boundary of a regular neighborhood of $C\cup C'$ in $S$. 

Any monodromy $\phi\in Mod(S,\partial S)$ can be expressed as a product of right and left-handed Dehn twists.
By applying a sequence of conjugations and compositions with left-handed Dehn twists about non-separating curves, one can pass from $\phi$ to an open book monodromy as in Lemma \ref{lemma:braid}; combining this with Proposition \ref{prop:nonzero} and Theorem \ref{thm:comultiplication} completes the proof.
\end{proof}

\section{Proof of Theorem 1.3}

In this section we prove Theorem \ref{thm:largetwist} by combining the following two Lemmas with Theorems \ref{thm:binding} and \ref{thm:comultiplication} and Proposition \ref{prop:nonzero}.

\begin{lemma}
\label{lem:simple}
Suppose that $c\subset S$ is an embedded arc connecting $\partial S$ to $p\in P$, disjoint from all other points of $P$. Let $\phi \in Mod(S\smallsetminus P,\partial S)$ be a monodromy such that $\phi(c)$ is to the right of $c$, and disjoint from $c$ along its interior.

If $\phi$ fixes $p$ there exists a simple closed curved $\delta\in S\smallsetminus P$ such that the right-hand Dehn twist $\tau_\delta$ maps $c$ to $\phi(c)$.
If $\phi$ does not fix $p$ there exists an embedded arc $\gamma$ from $p$ to $\phi(p)$ such that the right-hand half twist $\sigma_{\gamma}$ maps $c$ to $\phi(c)$.
\end{lemma}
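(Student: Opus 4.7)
The strategy is to construct $\delta$ (respectively $\gamma$) geometrically from $c$ and $\phi(c)$, and then verify that the associated Dehn twist (respectively half twist) sends $c$ to an arc isotopic to $\phi(c)$ in $S \setminus P$ rel endpoints. The underlying principle is that the isotopy class of an arc in $S \setminus P$ rel endpoints is determined by the marked-point content of the region it cobounds with a fixed reference arc.

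Assume first that $\phi$ fixes $p_1$, so both arcs run from $q := c \cap \partial S$ to $p_1$. Being disjoint in their interiors with $\phi(c)$ to the right, their union bounds a bigon $B \subset S$ on the right of $c$; set $P' := B \cap P$. I would construct $\delta$ as the boundary of a closed disk $D \subset S$ obtained from $\overline{B}$ by removing a small neighborhood of $q$ (so that $D \cap \partial S = \emptyset$) and by adjoining a small neighborhood of $p_1$ (so that $p_1 \in D^{\mathrm{int}}$); thus $D \cap P = P' \cup \{p_1\}$ and $\delta \subset S \setminus P$. Then $c$ starts outside $D$ at $q$, ends inside $D$ at $p_1$, and meets $\delta$ transversely in exactly one point, so $\tau_\delta(c)$ differs from $c$ by a rightward wrap around $\delta$. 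Both $\tau_\delta(c)$ and $\phi(c)$ now lie to the right of $c$ and cobound with $c$ a region enclosing precisely the marked points $P'$---any apparent contribution of the endpoint $p_1$ to the wrap is absorbed by isotopy near $p_1$---so $\tau_\delta(c)$ and $\phi(c)$ are isotopic in $S \setminus P$ rel endpoints.

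Assume now $p_j := \phi(p_1) \neq p_1$; then $c$ runs from $q$ to $p_1$ and $\phi(c)$ from $q$ to $p_j$, disjoint in their interiors with $\phi(c)$ to the right. I would build $\gamma$ as an embedded arc from $p_1$ to $p_j$ which hugs $c$ from $p_1$ toward $q$ and then hugs $\phi(c)$ from $q$ toward $p_j$, remaining close enough to $c \cup \phi(c)$ to avoid all other marked points; by design, the triangular region bounded by $c \cup \gamma \cup \phi(c)$ is free of marked points in its interior. The positive half twist $\sigma_\gamma$ exchanges $p_1$ and $p_j$, and $\sigma_\gamma(c)$ coincides with $c$ outside a small neighborhood of $\gamma$ while inside the neighborhood it ends at $p_j$ via a short path close to $\gamma$. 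The bigon between $\sigma_\gamma(c)$ and $\phi(c)$ is therefore essentially the above triangle, which contains no marked points, so $\sigma_\gamma(c)$ and $\phi(c)$ are isotopic rel endpoints in $S \setminus P$.

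The main technical hurdle is justifying these isotopies rigorously---in particular, confirming that the Dehn twist $\tau_\delta$ (respectively the half twist $\sigma_\gamma$) really acts on the arc $c$ as claimed up to isotopy. This reduces to a careful local analysis near $\delta$ (respectively $\gamma$) to see how the twist deforms an arc that crosses or terminates in its support, and to verify that the resulting arc has the same bigon-with-$c$ content as $\phi(c)$. The geometric prescriptions for $\delta$ and $\gamma$ are precisely dictated by forcing this comparison to hold.
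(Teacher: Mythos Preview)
Your argument has a genuine gap in the case where $\phi$ fixes $p_1$. You assert that since $c$ and $\phi(c)$ share endpoints and have disjoint interiors, ``their union bounds a bigon $B \subset S$ on the right of $c$.'' This is false in general: the simple closed curve $c \cup \phi(c)$ (pushed off $\partial S$) can be nonseparating in $S$, or can separate off a subsurface of positive genus, so no such bigon need exist. Your curve $\delta$ is built from $\overline{B}$, so the construction collapses whenever $S$ has genus or extra boundary. More fundamentally, your ``underlying principle''---that the isotopy class of an arc rel endpoints is determined by the marked-point content of the region it cobounds with a reference arc---is simply not true on a general surface; two such arcs can cobound regions with identical marked-point content but different topology and hence fail to be isotopic. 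Your verification that $\tau_\delta(c)$ and $\phi(c)$ are isotopic rests entirely on this false principle.

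The paper avoids this by never invoking any bounded region. It takes $\alpha = c \cup \phi(c)$, pushes it slightly into the interior near $\partial S$, and lets $\delta$ be the boundary component of a thin annular neighborhood $N(\alpha)$ that $c$ crosses once (the other boundary component is disjoint from $c$). Since $\delta$ is parallel to $\alpha$ and $c$ meets it exactly once, $\tau_\delta$ bands $c$ once around $\alpha$; unwinding this shows directly that $\tau_\delta(c)$ is isotopic to $\phi(c)$, with no appeal to what $\alpha$ might bound. Your treatment of the second case (where $\gamma$ hugs $c \cup \phi(c)$) is essentially the paper's construction and is fine, since there the comparison region really is a thin strip; but the first case needs to be redone along the paper's lines.
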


\begin{proof}
Suppose that $\phi$ fixes $p$. Let $\alpha = c \cup \phi (c)$. We apply an isotopy supported in a neighborhood of $\alpha\cap \partial S$ to push $\alpha$ into the interior of $S$, still denoting the resulting curve $\alpha$. The curve $\alpha$ admits an annular neighborhood $S^1\times I\simeq N(\alpha)\subset S$, so that $N(\alpha)\cap P = p$. $N(\alpha)$ has two boundary components, one which is disjoint from $c$, and one intersecting $c$ transversely in one point. Letting $\delta$ denote the latter component of $\partial N(\alpha)$, clearly $\tau_\delta (c) = \phi(c)$.

If $\phi$ does not fix $p$ let $\gamma = c\cup \phi (c)$. We apply a small isotopy supported in a neighborhood of $\gamma\cap \partial S$ to push $\gamma$ into the interior of $S$. It is clear that $\sigma_\gamma (c) = \phi (c)$.
\end{proof}

\begin{remark}
The following Lemma is similar to Lemma 5.2 of \cite{RVD}, where a surface with possibly multiple boundary components, but no marked points, is considered. 


\end{remark}

We denote, as in Subsection \ref{subsec:FDTC}, that $\phi (a)$ is to the right of $a$ by $a\le \phi(a)$.

\begin{lemma}
\label{lem:sequence}
Suppose that $e_0\subset S$ is an embedded arc connecting $\partial S$ to $p\in P$, disjoint from all other marked points. Suppose $\phi \in Mod(S\smallsetminus P,\partial S)$ takes the arc $e_0$ to the right, i.e. $e_0\le \phi(e_0)$. Then there exists a sequence of arcs
\[
e_0\le e_1\le e_2\le \dots\le e_m = \phi (e_0)
\]
connecting $e_0\cap \partial S$ to a point of $P$, such that $e_i$ and $e_{i+1}$ are disjoint along their interiors for each $0\le i\le m$.
\end{lemma}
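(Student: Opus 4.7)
The plan is to adapt the proof of Lemma 5.2 in Honda-Kazez-Mati\'{c} \cite{hkmrv} to the setting with marked points, as the preceding Remark indicates. The argument will be an induction on the minimum number $N$ of interior intersections of $e_0$ with $\phi(e_0)$ in $S \smallsetminus P$. First I would put $e_0$ and $\phi(e_0)$ into \emph{efficient position} rel their endpoints (the shared boundary point and the marked points $p_1$, $\phi(p_1) \in P$), so that they realize the minimal geometric intersection number in $S \smallsetminus P$ and in particular cobound no embedded bigons there. If $N = 0$, then $e_0$ and $\phi(e_0)$ are already disjoint along their interiors and it suffices to set $m = 1$ and $e_1 = \phi(e_0)$.

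Now suppose $N > 0$. Parametrize $\phi(e_0)$ from the boundary point and let $q$ be the first interior intersection with $e_0$ encountered, say at parameter $t_0$; write also $q = e_0(s_0)$. Define the intermediate arc $b$ by concatenating the initial segment $\phi(e_0)|_{[0, t_0]}$ with the final segment $e_0|_{[s_0, 1]}$, then perturb the $e_0$-portion slightly to the right of $e_0$ and smooth the corner at $q$. The key verifications are then: first, $b$ is an embedded arc from $e_0 \cap \partial S$ to $p_1 \in P$; second, $e_0 \le b \le \phi(e_0)$, where the left inequality holds because near the boundary $b$ agrees with $\phi(e_0)$, while the right one holds because $b$ subsequently peels off to the right of $e_0$ but, by efficient position, stays to the left of the remaining portion of $\phi(e_0)$; and third, $|b \cap \phi(e_0)|_{\mathrm{int}} < N$, since the initial $\phi(e_0)$-segment of $b$ contributes no interior intersections (it is a subarc of $\phi(e_0)$) and the $e_0$-portion inherits only those intersections of $e_0$ with $\phi(e_0)$ lying past $q$, so that $q$ itself is eliminated. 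Applying the inductive hypothesis to the pair $(b, \phi(e_0))$ then produces a sequence $b = e_1 \le e_2 \le \dots \le e_m = \phi(e_0)$ with consecutive arcs disjoint along their interiors; prepending $e_0$ completes the construction.

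The main obstacle I expect is the technical verification that (a) the perturbation of $b$ off of $e_0$ can be performed so as to create no new intersections with $\phi(e_0)$, so that the intersection count genuinely strictly decreases, and (b) the rightness ordering $e_0 \le b \le \phi(e_0)$ is maintained. Both points rely on efficient position and on bigon-elimination arguments analogous to those in the HKM proof. The presence of marked points introduces a little combinatorial bookkeeping near the endpoint $p_1$ (for instance, the cases $\phi(p_1) = p_1$ versus $\phi(p_1) \ne p_1$ look a bit different), but the construction always lands at $p_1 \in P$, matching the topological form of $e_0$ and $\phi(e_0)$; in particular the Remark's concern that HKM's intermediate arc might connect two distinct boundary components does not arise in our setup.
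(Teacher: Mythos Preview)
Your construction has a genuine gap: the inequality $b\le\phi(e_0)$ does not always hold. Write $a:=e_0$ and $c:=\phi(e_0)$. At the corner $q$ your arc $b$ turns from following $c$ to following $a$; whether this is a left or a right turn relative to $c$ is governed by the sign of the intersection of $a$ and $c$ at $q$. If $(a',c')$ is a negatively oriented basis for $T_qS$ then $(c',a')$ is positive, $b$ turns left off of $c$, and indeed $b\le c$. But if $(a',c')$ is positively oriented---which certainly occurs---then $b$ turns \emph{right} off of $c$, giving $c\le b$ instead. The appeal to ``efficient position'' does not rescue this: in the universal cover $\tilde b$ begins along $\tilde c$ and then diverges to the right onto a lift of $a$, so the geodesic representative of $b$ is genuinely to the right of $c$ at the boundary. (A related minor point: pushing the $e_0$-portion ``to the right of $e_0$'' only keeps $b$ disjoint from $e_0$ in the positive-sign case; in the negative-sign case the initial $c$-segment approaches $q$ from the left of $a$, and pushing right creates a crossing.)

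This sign dichotomy is exactly why the paper's proof breaks into cases. When the relevant intersection has negative sign a simple cut-and-paste of the kind you describe works (this is the paper's Case~1, in mirror form). The positive-sign situations require more elaborate intermediate arcs: in the hardest case, when $q$ is simultaneously the first intersection along both $a$ and $c$, the paper analyzes the region cut off by the two initial segments and produces $b$ by connecting to a marked point inside that region, running over a handle, or reaching a sub-arc of $a$ lying on another boundary component of the complementary region. No single splice at $q$ suffices in general.
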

\begin{proof}
To simplify notation let $a := e_0$ and $c:= \phi (e_0)$.
We let $\#(a,c)$ denote the geometric intersection number of $a$ with the interior of $c$.
 It suffices to show that if $a\le c$, $a\ne c$, and $\#(a,c)\ne 0$ then there exists a properly embedded arc $b$ such that $a\le b\le c$ and $\#(a,b),\#(b,c)< \#(a,c)$. The desired sequence of arcs can be obtained by iterating the construction.

We endow $S\smallsetminus P$ with a hyperbolic metric and assume that $a$ and $c$ are parametrized geodesic arcs intersecting in a collection of points $\{x_1,\dots,x_m\}=\{y_1,\dots,y_m\}$ where
\begin{gather*}
x_i = a(t_i) \text{ for some } 0=t_0\le t_1\le \dots \le t_m = 1\\
y_i = c(s_i) \text{ for some } 0=s_0\le s_1\le \dots \le s_m = 1.
\end{gather*}

The proof is a case by case analysis.

\begin{enumerate}[leftmargin=*]
\item
Suppose that $x_1=y_r$, and that at $x_1$ the tangent vectors to $a$ and $c$, in this order, form a negative basis for $T_{x_1}(S)$. Consider $b = a|_{[0,t_1]}\ast c|_{[s_r,1]}$, by which we mean the concatenation. Clearly we have that $\#(a,b) = \#(a,c)-r$ and $\#(b,c) = 0$. Note that $b$ is veers to the left of $c$ near $\partial S$, since they intersect efficiently it follows that $b\le c$. When smoothed, the piecewise geodesic path $b$ veers to the right of $a$.
\item

Suppose now that $x_1 = y_r$, with $r>1$, and that at $x_1$ the tangent vectors to $a$ and $c$ form a positive basis for $T_{x_1}(S)$. Let $x_{r'}= y_{r''}$ be the last point of $a$ to intersect $c|_{[0,s_r]}$.  There are two sub-cases:
\begin{enumerate}
\item

Suppose that the tangent vectors to $a$ and $c$ form a negative basis for $T_{x_{r'}}(S)$. Set $b = c|_{[0,s_{r''}]}\ast a|_{[t_{r'},1]}$. It is clear that $\#(a,b),\#(b,c)<\#(a,c)$. 

We claim that $a\le b\le c$. 
We pass to the universal cover $\pi: \widetilde{S}\to S$. Let $\widetilde{a}$ and $\widetilde{c}$ be lifts of $a$ and $c$ starting at some fixed lift $\widetilde{x_0}$ of $x_0$. There is a natural lift $\widetilde{b}$ of $b$ which consists of geodesic arcs along $\widetilde{c}$ and some lift of $a$ different from $\widetilde{a}$. In particular, $\widetilde{b}$ starts along $\widetilde{c}$ to the right of $\widetilde{a}$, and then switches to some other lift of $a$. Because any two distinct lifts of $a$ are disjoint, it follows that $a\le b$. Since $\widetilde{b}$ starts along $\widetilde{c}$ and diverges to the left along a lift of $a$, it is clear that $b\le c$.
\item

Suppose that the tangent vectors to $a$ and $c$ form a positive basis for $T_{x_{r'}}(S)$. We set $b = a|_{[0,t_1]} \ast (c|_{[s_{r''},s_{r}]})^{-1} \ast a|_{[t_{r'},1]}$. Again, it is clear that $\#(a,b),\#(b,c)<\#(a,c)$. Establishing $a\le b\le c$ involves passing to the universal cover as in the previous sub-case.
\end{enumerate}

\item
Suppose that $x_1=y_1$, and that at $x_1$ the tangent vectors to $a$ and $c$ form a positive basis for $T_{x_1}(S)$. Let $\gamma = c|_{[0,s_1]}\ast (a|_{[0,t_1]})^{-1}$. There are two sub-cases.
\begin{enumerate}

\item
Assume that $\gamma$ is separating in $S\smallsetminus P$. Let $R$ denote the subsurface of $S$ having oriented boundary $\gamma$. Since $a$ and $c$ are assumed to be intersecting efficiently, $R$ can not be a bigon disjoint from $P$. If $R$ contains some point $p'\in P$ it is easy to connect $x_0$ to $p'$ with an arc $b\subset R$ satisfying the desired properties. 

If $R\cap P = \emptyset$, then there is an arc $\eta\subset R$, which starts at $x_0$, runs over a handle in $R$ and ends at $x_1$, and whose interior intersects neither $a$ nor $c$. Set $b = \eta \ast a_{[t_1,1]}$. It is clear that $a\le b\le c$. Although we have not reduced the intersection number, $\#(a,b) = \#(a,c)$, we have reduced to the final sub-case:
\item

Assume that $\gamma$ is not separating. Let $R$ denote the connected component of $S\smallsetminus a\smallsetminus c$ having an oriented boundary component $\gamma$. Since $\gamma$ is not separating, $R$ must have atleast one other boundary component $\delta\ne\gamma$, which consists of a union of sub-arcs along $a$ and $b$ along with possibly $\partial S$.

We claim that for some $i>0$, $a_{[t_i,t_{i+1}]}\subset \delta$. If $\delta$ contains no such sub-arc of $a$ then it must then contain $a|_{[0,t_1]}$ as an oriented sub-arc. It follows that $\delta$ contains $c_{[s_1,s_2]}$. Let $x_i = y_2$, then $\delta$ contains either $(a|_{[t_{i-1},t_{i}]})^{-1}$ or $a|_{[t_i,t_{i+1}]}$, contradicting our assumption.

Now, we may connect $x_0$ to $x_{i+1}$ by an arc $\eta\subset R$ initially to the right of $a$ and left of $c$. Setting $b = \eta\ast a_{[t_{i+1},1]}$ gives the desired arc. 
\end{enumerate}
\end{enumerate}

\end{proof}

We turn to the proof of Theorem \ref{thm:largetwist}. For a pointed monodromy $h$, we let $\widehat{t}(h)$ denote the BRAID invariant of its transverse closure.
\begin{proof}
Let $e_0$ be an embedded arc connecting $\partial S$ to $p\in P$.
By Lemma \ref{lem:boundarytwisting} we have that $c(g)>1 \implies \tau_{\partial S} ^{-1} \circ g$ is right veering; in particular $\tau_{\partial S}^{-1} \circ g$ takes the arc $e_0$ to the right.

By Lemmas \ref{lem:simple} and \ref{lem:sequence} there exists a product, $P$, of right-handed Dehn and half twists taking the arc $e_0$ to $\tau_{\partial S} ^{-1} \circ g(e_0)$. Comultiplication (Theorem \ref{thm:comultiplication}) combined with Proposition \ref{prop:nonzero} implies $\widehat{t}(P)\ne 0$. The composition
\[
P^{-1}\circ \tau_{\partial S} ^{-1} \circ g= \tau_{\partial S} ^{-1} \circ P^{-1} \circ g
\]
fixes the arc $e_0$, and hence a small neighborhood of $\partial S \cup e_0$, which is an annular neighborhood of $\partial S$ containing only the marked point $p$. Lemma \ref{lemma:recognition} then implies that the closure $P^{-1}\circ g$ is transversely isotopic to $B'\cup K'$ for some $K'$ braided about $B'$, the binding of the underlying open book.
By Theorem \ref{thm:binding} we have that $\widehat{t}(P^{-1}\circ g)\ne 0$. A final application of Theorem \ref{thm:comultiplication} coupled with $\widehat{t}(P)\ne 0$ gives that $\widehat{t}(g)\ne 0$.

\end{proof}


\nocite{IKess,IKquasi,VVbind,StipV,LOSS, equiv, Pla, comultcontact, comultgrid, HKM, contactclass, openbooks, legtransverse, torsionob, trivialbraid, holknots}


\thispagestyle{empty}
{\small
\markboth{References}{References}
\bibliographystyle{myalpha}
\bibliography{mybib}{}
}

\end{document}